\documentclass{amsart}
%%%%%%%%%%%%%%%%%%%%%%%%%%%%%%%%%%%%%%%%%%%%%%%%%%%%%%%%%%%%%%%%%%%%%%%%%%%%%%%%%%%%%%%%%%%%%%%%%%%%%%%%%%%%%%%%%%%%%%%%%%%%%%%%%%%%%%%%%%%%%%%%%%%%%%%%%%%%%%%%%%%%%%%%%%%%%%%%%%%%%%%%%%%%%%%%%%%%%%%%%%%%%%%%%%%%%%%%%%%%%%%%%%%%%%%%%%%%%%%%%%%%%%%%%%%%
\usepackage{amsfonts}

\setcounter{MaxMatrixCols}{10}
%TCIDATA{OutputFilter=LATEX.DLL}
%TCIDATA{Version=5.50.0.2945}
%TCIDATA{<META NAME="SaveForMode" CONTENT="1">}
%TCIDATA{BibliographyScheme=Manual}
%TCIDATA{Created=Thursday, October 21, 2010 18:06:46}
%TCIDATA{LastRevised=Friday, September 09, 2011 17:18:10}
%TCIDATA{<META NAME="GraphicsSave" CONTENT="32">}
%TCIDATA{<META NAME="DocumentShell" CONTENT="Articles\SW\AMS Journal Article">}
%TCIDATA{Language=American English}
%TCIDATA{CSTFile=amsart.cst}

\theoremstyle{plain}
\newtheorem{theorem}{Theorem}
\newtheorem{lemma}{Lemma}
\newtheorem{corollary}{Corollary}
\newtheorem{proposition}{Proposition}

\theoremstyle{definition}

\theoremstyle{remark}
\newtheorem{remark}{Remark}

\numberwithin{equation}{section}
\input{tcilatex}

\begin{document}
\title[Kibble--Slepian kernel ]{Towards a $q-$analogue of the
Kibble--Slepian formula in $3$ dimensions}
\author{Pawe\l\ J. Szab\l owski}
\address{Department of Mathematics and Information Sciences,\\
Warsaw University of Technology\\
pl. Politechniki 1, 00-661 Warsaw, Poland}
\email{pawel.szablowski@gmail.com}
\date{October, 2010}
\subjclass[2010]{Primary 33D45, 05A30, ; Secondary 42C10, 26C05, 60E05}
\keywords{Kibble--Slepian formula, Askey--Wilson density, orthogonal
polynomials, q-Hermite, Al-Salam--Chihara polynomials, Poisson--Mehler
kernel, positive kernels}

\begin{abstract}
We study a generalization of the Kibble--Slepian (KS) expansion formula in 3
dimensions. The generalization is obtained by replacing the Hermite
polynomials by the $q-$Hermite ones. If such a replacement would lead to
non-negativity for all allowed values of parameters and for all values of
variables ranging over certain Cartesian product of compact intervals then
we would deal with a generalization of the $3$ dimensional Normal
distribution. We show that this is not the case. We indicate some values of
the parameters and some compact set in $\mathbb{R}^{3}$ of positive measure,
such that the values of the extension of KS formula are on this set
negative. Nevertheless we indicate other applications of so generalized KS
formula. Namely we use it to sum certain kernels built of the
Al-Salam--Chihara polynomials for the cases that were not considered by
other authors. One of such kernels sums up to the Askey--Wilson density
disclosing its new, interesting properties. In particular we are able to
obtain a generalization of the $2$ dimensional Poisson--Mehler formula. As a
corollary we indicate some new interesting properties of the Askey--Wilson
polynomials with complex parameters. We also pose several open questions.
\end{abstract}

\thanks{The author is grateful to the unknown referee for indicating
positions of the literature were somewhat similar but much lengthier results
were obtained. }
\maketitle

\section{Introduction}

In 1945 W.F. Kibble \cite{Kibble} and later, independently D. Slepian \cite%
{Slepian72} have extended the Poisson--Mehler formula to higher dimensions,
expanding ratio of the standardized multidimensional Gaussian density
divided by the product of one dimensional marginal densities in a multiple
sum involving only the powers of constants (correlation coefficients) and
the Hermite polynomials. The symmetry of this beautiful formula encourages
further generalizations in the sense that the Hermite polynomials in the KS
formula are substituted by their generalizations.

Nice generalization of the Hermite polynomials emerged more than hundred
years ago but only recently was intensively studied. The generalized Hermite
polynomials are called the $q-$Hermite polynomials and constitute a one
parameter family of orthogonal polynomials that for $q\allowbreak
=\allowbreak 1$ is exactly equal to the family of the classical Hermite
polynomials.

We indicate that the function of $3$ variables obtained with a help of so
generalized KS formula has many properties of a $3$ dimensional density. Let
us call it $f_{3D}.$ We know its marginals which are nonnegative. We are
able to calculate all moments of a supposed to exist random vector that
would have this function as its joint density. In particular we can
calculate the variance--covariance matrix of this "random vector". This
matrix is exactly the same as in the Gaussian case. Thus the question
remains if function $f_{3D}$ is really nonnegative for some values of
parameter $q$ (say $-1<q\leq 1)$ and all values of correlation coefficients
that make the variance-covariance matrix positive definite and almost all
values of variables from certain product of compact intervals. It will turn
out that it is not. We will indicate particular values of $q,$ of
correlation coefficients and a subset in $\mathbb{R}^{3}$ of positive
measure such that in $3$ dimensions so constructed generalization of the KS
formula is negative on this set.

Nevertheless we point out that it is worth to study described above sums
since we obtain a nice and simple tool for examining properties of different
kernels involving the so called Al-Salam--Chihara (ASC) polynomials. The
kernels built of the ASC polynomials were studied by Askey, Rahman and
Suslov in \cite{suslov96}. Such kernels have many applications particularly
in connection with certain models of the so called $q-$oscillators
considered in quantum physics. See e.g. \cite{Floreanini97}, \cite{Flo97}.

By studying the described above sums of the $q-$Hermite or ASC polynomials
we obtain kernels that are different than those considered in \cite{suslov96}%
. Hence we obtain new results related to an important problem of summing
kernels.

One of these new results is a generalization of Poisson--Mehler expansion
formula in the sense that the $q-$Hermite polynomials are replaced by the
ASC ones. Of course the sum is different. Instead of the density of measure
that makes ASC polynomials orthogonal (classical case) we get the density of
measure that makes the Askey--Wilson polynomials orthogonal. We also analyze
other non-symmetric kernels built of the ASC polynomials and sum them. As a
by-product we point out in Remark \ref{AWdensity}, below the possibility of
an interesting decomposition of the Askey-Wilson polynomials.

The paper is organized as follows. In the next Section \ref{aux} we
introduce all necessary auxiliary information concerning the so called $q-$%
series theory. In particular we introduce the $q-$Hermite and ASC
polynomials and present their basic properties. In the following Section \ref%
{main} we present our main results while less interesting or longer proofs
are shifted to the last Section \ref{dowody}. We also include special
Section \ref{open} with open problems since not all questions that appeared
when studying this beautiful object we were able to answer.

\section{Notation and auxiliary results\label{aux}}

Let us introduce notation traditionally used in the $q-$series theory. $q$
is a parameter. It can be real or complex, usually if complex, then $%
\left\vert q\right\vert <1$. We will assume however throughout the paper
that $-1<q\leq 1$. Having $q$ we define $\left[ 0\right] _{q}\allowbreak
=\allowbreak 0,$ $\left[ n\right] _{q}\allowbreak =\allowbreak 1+q+\ldots
+q^{n-1}\allowbreak ,$ $\left[ n\right] _{q}!\allowbreak =\allowbreak
\prod_{i=1}^{n}\left[ i\right] _{q},$ with $\left[ 0\right] _{q}!\allowbreak
=1,\allowbreak $%
\begin{equation*}
\QATOPD[ ] {n}{k}_{q}\allowbreak =\allowbreak \left\{ 
\begin{array}{ccc}
\frac{\left[ n\right] _{q}!}{\left[ n-k\right] _{q}!\left[ k\right] _{q}!} & 
, & n\geq k\geq 0 \\ 
0 & , & otherwise%
\end{array}%
\right. .
\end{equation*}
\newline
It is useful to use the so called $q-$Pochhammer symbol for $n\geq 1:\left(
a;q\right) _{n}=\prod_{i=0}^{n-1}\left( 1-aq^{i}\right) ,$ with $\left(
a;q\right) _{0}=1$ , $\left( a_{1},a_{2},\ldots ,a_{k};q\right)
_{n}\allowbreak =\allowbreak \prod_{i=1}^{k}\left( a_{i};q\right) _{n}$.%
\newline
Often $\left( a;q\right) _{n}$ as well as $\left( a_{1},a_{2},\ldots
,a_{k};q\right) _{n}$ will be abbreviated to $\left( a\right) _{n}$ and $%
\left( a_{1},a_{2},\ldots ,a_{k}\right) _{n},$ if it will not cause
misunderstanding.

It is easy to notice that $\left( q\right) _{n}=\left( 1-q\right) ^{n}\left[
n\right] _{q}!$ and that%
\begin{equation*}
\QATOPD[ ] {n}{k}_{q}\allowbreak =\allowbreak \allowbreak \left\{ 
\begin{array}{ccc}
\frac{\left( q\right) _{n}}{\left( q\right) _{n-k}\left( q\right) _{k}} & ,
& n\geq k\geq 0 \\ 
0 & , & otherwise%
\end{array}%
\right. .
\end{equation*}%
\newline
Notice that for $n\allowbreak \geq \allowbreak k\allowbreak \geq \allowbreak
0$ we have: $\left[ n\right] _{1}\allowbreak =\allowbreak n,\left[ n\right]
_{1}!\allowbreak =\allowbreak n!,$ $\QATOPD[ ] {n}{k}_{1}\allowbreak
=\allowbreak \binom{n}{k}$ (binomial coefficient)$,$ $\left( a;1\right)
_{n}\allowbreak =\allowbreak \left( 1-a\right) ^{n}$ and $\left[ n\right]
_{0}\allowbreak =1$ for $n\geq 1$, $\left[ n\right] _{0}!\allowbreak
=\allowbreak 1,$ $\QATOPD[ ] {n}{k}_{0}\allowbreak =\allowbreak 1,$ $\left(
a;0\right) _{n}\allowbreak =\allowbreak \left\{ 
\begin{array}{ccc}
1 & if & n=0 \\ 
1-a & if & n\geq 1%
\end{array}%
\right. .$ Let us also denote $I_{A}(x)\allowbreak =\allowbreak \left\{ 
\begin{array}{ccc}
1 & if & x\in A \\ 
0 & if & x\notin A%
\end{array}%
\right. .$

To define briefly and swiftly the one-dimensional distributions that later
will be used to construct possible multidimensional generalizations of the
Normal distributions, let us define the following sets: 
\begin{equation*}
S\left( q\right) =[-2/\sqrt{1-q},2/\sqrt{1-q}],\text{for }\left\vert
q\right\vert <1\text{ and }S\left( 1\right) =\mathbb{R}.
\end{equation*}

Further we define the following sets of polynomials:

-the $q-$Hermite polynomials defined by the relationship:%
\begin{equation}
H_{n+1}(x|q)=xH_{n}(x|q)-[n]_{q}H_{n-1}(x|q),  \label{He}
\end{equation}%
for $n\geq 0$ with $H_{-1}(x|q)=0,$ $H_{0}(x|q)=1,$

-the Al-Salam--Chihara (ASC) polynomials defined by the relationship$:$%
\begin{equation}
P_{n+1}(x|y,\rho ,q)=(x-\rho yq^{n})P_{n}(x|y,\rho ,q)-(1-\rho
^{2}q^{n-1})[n]_{q}P_{n-1}(x|y,\rho ,q),  \label{AlSC}
\end{equation}%
for $n\geq 0$ with $P_{-1}\left( x|y,\rho ,q\right) \allowbreak =\allowbreak
0,$ $P_{0}\left( x|y,\rho ,q\right) \allowbreak =\allowbreak 1$,

-the Chebyshev polynomials of the second kind defined by the relationship: 
\begin{equation*}
U_{n+1}\left( x\right) =2xU_{n}\left( x\right) -U_{n-1}\left( x\right) ,
\end{equation*}%
for $n\geq 0$ with $U_{-1}\left( x\right) \allowbreak =\allowbreak 0,$ $%
U_{0}\left( x\right) \allowbreak =\allowbreak 1.$

Notice that $H_{n}\left( x|1\right) \allowbreak =\allowbreak H_{n}\left(
x\right) ,$ $H_{n}\left( x|0\right) \allowbreak =\allowbreak U_{n}\left(
x/2\right) $ and \newline
$P_{n}\left( x|y,\rho ,1\right) \allowbreak =\allowbreak H_{n}\left( \frac{%
x-\rho y}{\sqrt{1-\rho ^{2}}}\right) \left( 1-\rho ^{2}\right) ^{n/2},$
where $\left\{ H_{n}\left( x\right) \right\} $ denote the 'probabilistic'
Hermite polynomials i.e. monic polynomials that are orthogonal with respect
to the measure with the density $\exp (-x^{2}/2)/\sqrt{2\pi }.$ \newline
From Lemma \ref{pomoc} ii), below it follows that: 
\begin{equation}
P_{n}\left( x|y,\rho ,0\right) \allowbreak =\allowbreak U_{n}\left(
x/2\right) \allowbreak -\allowbreak \rho yU_{n-1}\left( x/2\right)
\allowbreak +\allowbreak \rho ^{2}U_{n-2}\left( x/2\right) .  \label{q=0}
\end{equation}

The polynomials (\ref{He}) satisfy the following very useful identity
originally formulated for the so called continuous $q-$Hermite polynomials $%
h_{n}$ (can be found in e.g. \cite{IA} Thm. 13.1.5) related to the
polynomials $H_{n}$ by: 
\begin{equation}
h_{n}\left( x|q\right) \allowbreak =\allowbreak \left( 1-q\right)
^{n/2}H_{n}\left( \frac{2x}{\sqrt{1-q}}|q\right) ,~~n\geq 1,  \label{q-cont}
\end{equation}%
and here, below presented for the polynomials $H_{n}$ : for $n,m\geq 0$ we
have 
\begin{equation}
H_{n}\left( x|q\right) H_{m}\left( x|q\right) =\sum_{j=0}^{\min \left(
n,m\right) }\QATOPD[ ] {m}{j}_{q}\QATOPD[ ] {n}{j}_{q}\left[ j\right]
_{q}!H_{n+m-2j}\left( x|q\right) .  \label{identity}
\end{equation}%
One can also find in the literature (e.g. \cite{Carlitz56}, \cite{Chen08})
the following useful formula: for $m,n\geq 0$ we have

\begin{equation}
H_{n+m}\left( x\right) =\sum_{j=0}^{\min \left( n,m\right) }(-1)^{j}q^{%
\binom{j}{2}}\QATOPD[ ] {m}{j}_{q}\QATOPD[ ] {n}{j}_{q}\left[ j\right]
_{q}!H_{n-j}\left( x|q\right) H_{m-j}\left( x|q\right)   \label{identyty2}
\end{equation}%
that was originally formulated for the so called Rogers--Szeg\"{o}
polynomials defined by:%
\begin{equation}
W_{n}\left( x|q\right) =\sum_{j=0}^{n}\QATOPD[ ] {n}{j}_{q}x^{j},  \label{Wn}
\end{equation}%
that are related to the continuous $q-$Hermite polynomials by: 
\begin{equation*}
h_{n}\left( x|q\right) \allowbreak =\allowbreak e^{in\theta }W_{n}\left(
e^{-i2\theta }|q\right) ,
\end{equation*}%
for $n\geq 0$ with $x\allowbreak =\allowbreak \cos \theta .$ (See also \cite%
{IA}.)

To simplify notation let us introduce the following auxiliary polynomials of
order at most $2:$ 
\begin{gather}
r_{k}\left( y|q\right) =(1+q^{k})^{2}-(1-q)yq^{k},  \label{rk} \\
v_{0}\left( x,y|\rho ,q\right) =(1-\rho ^{2})^{2}-(1-q)\rho (1+\rho
^{2})xy+(1-q)\rho ^{2}(x^{2}+y^{2}), \\
v_{k}\left( x,y|\rho ,q\right) =v_{0}\left( x,y|\rho q^{k},q\right) ,\text{ }%
k\geq 1.  \label{vk}
\end{gather}%
It is known (see e.g. \cite{bryc1}) that the $q-$Hermite polynomials are
monic and orthogonal with respect to the measure that has density given by:%
\begin{equation}
f_{N}\left( x|q\right) =\frac{\left( q\right) _{\infty }\sqrt{1-q}}{2\pi 
\sqrt{r_{0}\left( x^{2}|q\right) }}\prod_{k=0}^{\infty }r_{k}\left(
x^{2}|q\right) I_{S\left( q\right) }\left( x\right) ,  \label{qN}
\end{equation}%
defined for $\left\vert q\right\vert <1,$ $x\in \mathbb{R}$ and 
\begin{equation}
f_{N}\left( x|1\right) \allowbreak =\allowbreak \frac{1}{\sqrt{2\pi }}\exp
\left( -x^{2}/2\right) .  \label{q=1}
\end{equation}

Similarly it is known (e.g. from \cite{bms}) that the ASC polynomials are
monic and orthogonal with respect to the measures that for $q\in (-1,1]$
have densities. These densities are given for $\left\vert q\right\vert <1$
by: 
\begin{subequations}
\label{fCN}
\begin{equation}
f_{CN}\left( x|y,\rho ,q\right) =\frac{\sqrt{1-q}\left( q\right) _{\infty
}\left( \rho ^{2}\right) _{\infty }}{2\pi \sqrt{r_{0}\left( x^{2}|q\right) }}%
\prod_{k=0}^{\infty }\frac{r_{k}\left( x^{2}|q\right) }{v_{k}\left( x,y|\rho
,q\right) }I_{S\left( q\right) }\left( x\right)   \label{_1}
\end{equation}%
$\left\vert \rho \right\vert <1$, $x\in \mathbb{R},$ $y\in S\left( q\right) $
and for $q\allowbreak =\allowbreak 1$ by: 
\end{subequations}
\begin{equation*}
f_{CN}\left( x|y,\rho ,1\right) \allowbreak =\allowbreak \frac{1}{\sqrt{2\pi
\left( 1-\rho ^{2}\right) }}\exp \left( -\frac{\left( x-\rho y\right) ^{2}}{%
2\left( 1-\rho ^{2}\right) }\right) .
\end{equation*}

It is known (see e.g. \cite{IA} formula 13.1.10) that for $\left\vert
q\right\vert <1:$%
\begin{equation}
\sup_{x\in S\left( q\right) }\left\vert H_{n}\left( x|q\right) \right\vert
\leq W_{n}\left( 1|q\right) \left( 1-q\right) ^{-n/2}.  \label{ogr_H}
\end{equation}%
where $W_{n}$ is given by (\ref{Wn}).

We will also need auxiliary polynomials $\left\{ B_{n}\left( x|q\right)
\right\} _{n\geq -1}$ defined by the following 3-term recurrence:%
\begin{equation}
B_{n+1}\left( y|q\right) \allowbreak =\allowbreak -q^{n}yB_{n}\left(
y|q\right) +q^{n-1}\left[ n\right] _{q}B_{n-1}\left( y|q\right) ;n\geq 0,
\label{_B}
\end{equation}%
with $B_{-1}\left( y|q\right) =0,$ $B_{0}\left( y|q\right) =1$. 

In fact the polynomials $\left\{ B_{n}\left( y|q\right) \right\} _{n\geq -1}$
are equal to the polynomials $\left\{ h_{n}\left( y|q^{-1}\right) \right\} $
scaled and normalized in a certain way. The polynomials $\left\{ h_{n}\left(
y|q^{-1}\right) \right\} $ are known for a long time and were studied in 
\cite{AI84} and \cite{Askey89}. However with the present scaling and
normalization they were introduced in \cite{bms} where some of their basic
properties were presented and their auxiliary r\^{o}le in finding connection
coefficients between ASC and q-Hermite polynomials was shown. One can show
(see e.g. \cite{bms}) that $B_{n}\left( x|1\right) \allowbreak =\allowbreak
i^{n}H_{n}\left( ix\right) $ and 
\begin{equation*}
B_{n}\left( x|0\right) \allowbreak =\allowbreak \left\{ 
\begin{array}{ccc}
1 & if & n=0\vee 2 \\ 
-x & if & n=1 \\ 
0 & if & n>2%
\end{array}%
\right. .
\end{equation*}%
Further properties of these polynomials including their relationship to the $%
q$-Hermite polynomials are presented in \cite{Szab3}.

Facts concerning the $q$-Hermite, ASC and polynomials $B_{n}$, necessary for
the derivation of the results of the paper, are collected in the following
Lemma.

\begin{lemma}
\label{pomoc}Assume that $0\allowbreak <\allowbreak q\allowbreak \leq
\allowbreak 1,$ $\left\vert \rho \right\vert ,\left\vert \rho
_{1}\right\vert ,\left\vert \rho _{2}\right\vert <1,$ $n,m\geq 0,$ $x,y,z\in
S\left( q\right) ,$ then

i) 
\begin{equation*}
H_{n}\left( x|q\right) \allowbreak =\allowbreak \sum_{k=0}^{n}\QATOPD[ ] {n}{%
k}_{q}\rho ^{n-k}H_{n-k}\left( y|q\right) P_{k}\left( x|y,\rho ,q\right) ,
\end{equation*}

ii) $\forall n\geq 1:$%
\begin{gather*}
P_{n}\left( x|y,\rho ,q\right) \allowbreak =\allowbreak \sum_{k=0}^{n}\QATOPD%
[ ] {n}{k}_{q}\rho ^{n-k}B_{n-k}\left( y|q\right) H_{k}\left( x|q\right) , \\
\sum_{j=0}^{n}\QATOPD[ ] {n}{j}B_{n-j}\left( x|q\right) H_{j}\left(
x|q\right) \allowbreak =\allowbreak 0,
\end{gather*}

iii) 
\begin{equation*}
\int_{S\left( q\right) }H_{n}\left( x|q\right) H_{m}\left( x|q\right)
f_{N}\left( x|q\right) dx\allowbreak =\allowbreak \left\{ 
\begin{array}{ccc}
0 & when & n\neq m \\ 
\left[ n\right] _{q}! & when & n=m%
\end{array}%
\right. ,
\end{equation*}

iv) 
\begin{equation*}
\int_{S\left( q\right) }P_{n}\left( x|y,\rho ,q\right) P_{m}\left( x|y,\rho
,q\right) f_{CN}\left( x|y,\rho ,q\right) dx\allowbreak =\allowbreak \left\{ 
\begin{array}{ccc}
0 & when & n\neq m \\ 
\left( \rho ^{2}\right) _{n}\left[ n\right] _{q}! & when & n=m%
\end{array}%
\right. ,
\end{equation*}

v) 
\begin{equation*}
\int_{S\left( q\right) }H_{n}\left( x|q\right) f_{CN}\left( x|y,\rho
,q\right) dx=\rho ^{n}H_{n}\left( y|q\right) ,
\end{equation*}

vi) 
\begin{equation*}
\int_{S\left( q\right) }f_{CN}\left( x|y,\rho _{1},q\right) f_{CN}\left(
y|z,\rho _{2},q\right) dy=f_{CN}\left( x|z,\rho _{1}\rho _{2},q\right) ,
\end{equation*}

vii)%
\begin{equation*}
\sum_{n=0}^{\infty }\frac{\rho ^{n}}{\left[ n\right] _{q}!}H_{n}\left(
x|q\right) H_{n}\left( y|q\right) \allowbreak =\allowbreak f_{CN}\left(
x|y,\rho ,q\right) /f_{N}\left( x|q\right) ,
\end{equation*}

viii) For $|t|,\left\vert q\right\vert <1:$%
\begin{equation*}
\sum_{i=0}^{\infty }\frac{W_{i}\left( 1|q\right) t^{i}}{\left( q\right) _{i}}%
\allowbreak =\allowbreak \frac{1}{\left( t\right) _{\infty }^{2}}%
,\sum_{i=0}^{\infty }\frac{W_{i}^{2}\left( 1|q\right) t^{i}}{\left( q\right)
_{i}}\allowbreak =\allowbreak \frac{\left( t^{2}\right) _{\infty }}{\left(
t\right) _{\infty }^{4}},
\end{equation*}%
convergence is absolute, where $W_{i}\left( x|q\right) $ is defined by (\ref%
{Wn}),

ix) 
\begin{equation*}
\forall x,y\in S\left( q\right) :0<C\left( y,\rho ,q\right) \leq \frac{%
f_{CN}\left( x|y,\rho ,q\right) }{f_{N}\left( x|q\right) }\leq \frac{\left(
\rho ^{2}\right) _{\infty }}{\left( \rho \right) _{\infty }^{4}}.
\end{equation*}
\end{lemma}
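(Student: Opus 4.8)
The plan is to establish the nine items in a dependency order so that only (i), (ii) and (viii) need fresh computation, the rest being assembled from them; throughout I take $0<q<1$ (the case $q=1$ is the classical Hermite--Gaussian identities, recovered either directly or as the limit $q\to 1^-$). First I would prove the two connection formulas. For (i) and the first identity of (ii) I argue by induction on $n$, inserting the recurrences \eqref{He}, \eqref{AlSC} and \eqref{_B} into the respective right-hand sides and matching the shifted summation indices by means of the $q$-Pascal rules; the cases $n=0,1$ are checked by hand and the inductive step is pure $q$-binomial bookkeeping. The second identity of (ii) then costs nothing: substituting the first identity of (ii) into (i) and using $\QATOPD[ ] {n}{k}_{q}\QATOPD[ ] {k}{j}_{q}=\QATOPD[ ] {n}{j}_{q}\QATOPD[ ] {n-j}{k-j}_{q}$, the vanishing of the coefficient of each $H_j(x|q)$ with $j<n$ is exactly the asserted identity $\sum_{i=0}^{m}\QATOPD[ ] {m}{i}_{q}B_{i}(y|q)H_{m-i}(y|q)=0$ for $m\ge 1$.

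For (iii) and (iv) I would invoke the standard norm formula for monic orthogonal polynomials: if $p_{n+1}=(x-a_n)p_n-b_np_{n-1}$ and the orthogonalizing measure is a probability measure, then $\int p_n^2\,d\mu=\prod_{k=1}^{n}b_k$. Reading $b_k=[k]_q$ off \eqref{He} gives $[n]_q!$, and $b_k=(1-\rho^2q^{k-1})[k]_q$ off \eqref{AlSC} gives $(\rho^2;q)_n[n]_q!$; here I only borrow the cited facts that \eqref{qN} and \eqref{_1} are probability densities orthogonalizing the two families. Item (v) is then immediate from (i) and (iv): expanding $H_n(x|q)$ by (i) and integrating against $f_{CN}(\cdot\,|y,\rho,q)$, orthogonality to $P_0\equiv 1$ annihilates all terms with $k\ge 1$ and leaves $\rho^nH_n(y|q)$.

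The analytic core is (vii). I would prove the identity of the two kernels by testing both against the complete orthogonal system $\{P_m(\cdot\,|y,\rho,q)\}$ in $L^2(f_{CN})$, completeness being automatic because for $|q|<1$ the support $S(q)$ is compact and the moment problem determinate. The right-hand side $f_{CN}/f_N$ produces $\int P_m f_{CN}\,dx$, equal to $1$ for $m=0$ and $0$ otherwise by (iv). The series produces, after expanding $P_m$ by the first identity of (ii) and integrating term-by-term against $H_n(x|q)f_N(x|q)$ via (iii), the sum $\rho^m\sum_{n=0}^{m}\QATOPD[ ] {m}{n}_{q}B_{m-n}(y|q)H_n(y|q)$, which collapses to the same $1/0$ pattern by the second identity of (ii). The interchange of sum and integral is justified by the uniform bound \eqref{ogr_H} together with the convergence of the majorant $\sum_n|\rho|^nW_n^2(1|q)/(q)_n$, i.e.\ the second sum in (viii). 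Once (vii) holds, (vi) follows by expanding $f_{CN}(y|z,\rho_2,q)$ through (vii), using the reversibility $f_{CN}(x|y,\rho,q)f_N(y|q)=f_{CN}(y|x,\rho,q)f_N(x|q)$ (read off the symmetry of the series in (vii)), and performing the $y$-integration by (v), which rebuilds $f_{CN}(x|z,\rho_1\rho_2,q)$ via (vii).

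It remains to handle (viii) and (ix). Writing $W_n(1|q)=\sum_k\QATOPD[ ] {n}{k}_{q}$ and using the Euler evaluation $\sum_k t^k/(q)_k=1/(t)_\infty$, the first sum in (viii) is the Cauchy product $1/(t)_\infty^2$; the second sum, $(t^2)_\infty/(t)_\infty^4$, requires a genuine $q$-series evaluation (a Rogers-type bilinear summation of squares of Rogers--Szeg\H{o} polynomials), and this is where I expect the only real resistance. For (ix) the upper bound falls out of (vii): majorizing with \eqref{ogr_H} and $[n]_q!=(q)_n/(1-q)^n$ turns $f_{CN}/f_N$ into the second sum of (viii) at $t=\rho$, namely $(\rho^2)_\infty/(\rho)_\infty^4$. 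For the strictly positive lower bound I would instead divide \eqref{_1} by \eqref{qN} directly, which collapses to $(\rho^2)_\infty\prod_{k\ge 0}v_k(x,y|\rho,q)^{-1}$; since each $v_k$ in \eqref{vk} is continuous and strictly positive on the compact $S(q)$ and $v_k\to 1$ uniformly in $x$, the product is a continuous positive function of $x$ attaining a finite maximum, whence $C(y,\rho,q)>0$. The principal obstacles are thus the second generating function in (viii) and the careful legitimization (term-by-term convergence plus $L^2$-completeness) of the kernel identity in (vii).
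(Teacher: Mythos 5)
Your proposal is correct in substance, but it takes a genuinely different route from the paper's: the paper proves this lemma entirely by citation --- ii) from \cite{bms}, i) from \cite{IRS99} (formula 4.7) after rescaling (or from ii)), iii) and iv) adapted from \cite{IA}, v) and vi) from \cite{bryc1}, vii) as the classical Poisson--Mehler formula with references to \cite{IA}, \cite{bressoud}, \cite{Szab4}, viii) as Exercises 12.2(b),(c) of \cite{IA}, and ix) from \cite{Szab3} --- whereas you reconstruct the items internally along an explicit dependency graph. Your ordering is sound: the induction for the connection formulas in i) and the first identity of ii) is the kind of bookkeeping carried out in the cited sources; extracting the second identity of ii) by substituting one connection formula into the other and invoking linear independence of the $H_j(x|q)$ (at any fixed $\rho \neq 0$) is legitimate; the Favard-type norm product $\int p_n^2\,d\mu = \prod_{k=1}^n b_k$ for iii) and iv) is standard once one grants, exactly as the paper also does, that $f_N$ and $f_{CN}$ are the orthogonalizing probability densities; and v), then vi) via the reversibility $f_{CN}(x|y,\rho,q)f_N(y|q) = f_{CN}(y|x,\rho,q)f_N(x|q)$ and v), are assembled correctly. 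Your proof of vii) by pairing both sides against the system $\{P_m\}$, which via ii) and iii) collapses to the biorthogonality identity in ii), is a self-contained argument the paper does not give (it is close in spirit to \cite{Szab4}); note only that the completeness appeal should be phrased in $L^2(f_N\,dx)$ rather than $L^2(f_{CN}\,dx)$, since that is the measure you actually pair against --- immaterial on the compact $S(q)$ where polynomials are dense in both. Two further remarks. First, the one ingredient you leave unproved, $\sum_{i\geq 0} W_i^2(1|q)t^i/(q)_i = (t^2)_\infty/(t)_\infty^4$, is precisely the $a=b=1$ case of Carlitz's bilinear generating function for the Rogers--Szeg\H{o} polynomials \cite{Carlitz72}, i.e.\ the same Exercise 12.2(c) of \cite{IA} that the paper cites, so flagging it as borrowed leaves your account no less complete than the paper's own proof. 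Second, your direct quotient computation for the lower bound in ix), namely $f_{CN}/f_N = (\rho^2)_\infty \prod_{k\geq 0} v_k(x,y|\rho,q)^{-1}$, in fact yields the upper bound as well: each $v_k$ factors into four linear factors of the form $\bigl(1-\rho q^k e^{\pm i(\theta\pm\phi)}\bigr)$ with $x = \frac{2\cos\theta}{\sqrt{1-q}}$, $y = \frac{2\cos\phi}{\sqrt{1-q}}$, whence $v_k \geq (1-|\rho| q^k)^4$ uniformly on $S(q)\times S(q)$, so the detour through vii) and viii) for the upper bound, while valid, is not needed.
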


\begin{proof}
ii) was proved in \cite{bms}. i) is proved in \cite{IRS99} (formula 4.7) for
the polynomials $h_{n}$ and $Q_{n}\left( x|a,b,q\right) \allowbreak \ $%
related to the polynomials $P_{n}$ by the relationship $Q_{n}(x|a,b,q)%
\allowbreak =\allowbreak \left( 1-q\right) ^{n/2}P_{n}\left( \frac{2x}{\sqrt{%
1-q}}|\frac{2a}{\sqrt{\left( 1-q\right) b}},\sqrt{b},q\right) $. However one
can also derive it easily from ii). iii) and iv) are known (see e.g. \cite%
{IA}) for polynomials $h_{n}$ and $Q_{n}$. Thus here they are presented
after necessary adaptation to polynomials $H_{n}$ and $P_{n}.$ v) and vi)
can be found in \cite{bryc1}, but their particular cases in different form
were also shown in \cite{Bo}. vii) is in fact the famous Poisson--Mehler
formula which has many proofs. One of them is in \cite{IA} the other e.g. in 
\cite{bressoud} or \cite{Szab4}. viii) see Exercise 12.2(b) and 12.2(c) of 
\cite{IA}. ix) was proved in \cite{Szab3} (Proposition 1 vii)).
\end{proof}

Let us remark, following \cite{Szab3}, that 
\begin{equation}
f_{AW}(x|y,\rho _{1},z,\rho _{2},q)=\frac{f_{CN}\left( y|x,\rho
_{1},q\right) f_{CN}\left( x|z,\rho _{2},q\right) }{f_{CN}\left( y|z,\rho
_{1}\rho _{2},q\right) }  \label{AW}
\end{equation}%
is the density of the measure that makes the re-scaled Askey--Wilson (AW)
polynomials orthogonal. The AW polynomials mentioned in the paper are
considered for certain complex valued parameters related to $y,\rho
_{1},z,\rho _{2}$. For details see formula (2.5) of \cite{Szab3}. We will
call the function defined by (\ref{AW}) the AW density.

As mentioned in the Introduction the main object of this paper is a
generalization of Kibble--Slepian formula. Traditional KS formula has a form
of an expansion of the ratio of the non-degenerated $n-$dimensional Gaussian
density divided by the product of its one dimensional marginals in the
multiple series (in fact involving $n(n-1)/2$ fold sum) of the Hermite
polynomials in one variable with coefficients that are powers of
off-diagonal elements of variance-covariance matrix.\newline
We will analyze only the generalization of its three dimensional version. It
is simple to express and general enough to expose interesting properties and
applications.

Namely we will analyze the following function given by: 
\begin{gather}
f_{3D}(x_{1},x_{2},x_{3}|\rho _{12},\rho _{13},\rho
_{23},q)=\prod_{i=1}^{3}f_{N}\left( x_{i}|q\right)  \label{3D1} \\
\times \sum_{i,j,k\geq 0}\frac{\rho _{12}^{j}\rho _{23}^{k}\rho _{13}^{i}}{%
\left[ i\right] _{q}!\left[ j\right] _{q}!\left[ k\right] _{q}!}%
H_{i+j}\left( x_{1}|q\right) H_{j+k}\left( x_{2}|q\right) H_{i+k}\left(
x_{3}|q\right) ,  \label{3D2}
\end{gather}%
where $\left\vert \rho _{12}\right\vert ,\left\vert \rho _{13}\right\vert
,\left\vert \rho _{23}\right\vert <1.$ Let us denote%
\begin{equation*}
\mathbf{\rho =}\left[ 
\begin{array}{ccc}
1 & \rho _{12} & \rho _{13} \\ 
\rho _{12} & 1 & \rho _{23} \\ 
\rho _{13} & \rho _{23} & 1%
\end{array}%
\right] ,~~\Delta =\det \mathbf{\rho }.
\end{equation*}%
We will assume that parameters $\rho _{12},$ $\rho _{13},$ $\rho _{23}$ are
such that 
\begin{equation*}
\Delta =\allowbreak \allowbreak 1\allowbreak +\allowbreak 2\rho _{12}\rho
_{13}\rho _{23}\allowbreak -\allowbreak \rho _{12}^{2}\allowbreak
-\allowbreak \rho _{13}^{2}\allowbreak -\allowbreak \rho
_{23}^{2}\allowbreak \geq \allowbreak 0.
\end{equation*}%
Notice that parameter $\Delta $ can be presented in the following form: 
\begin{equation}
\Delta =(1-\rho _{12}^{2})(1-\rho _{23}^{2})-(\rho _{13}-\rho _{12}\rho
_{23})^{2},  \label{wyzn}
\end{equation}%
and similarly for other pairs of indices $\left( 1,2\right) $ and $(2,3)$
since $\Delta $ is obviously symmetric in $\rho _{12},\rho _{13},$ \ $\rho
_{23}.$

The above mentioned assumptions concerning the parameters $\rho _{12},\rho
_{13},$ $\rho _{23}$ will be assumed throughout the remaining part of the
paper. Similarly we will assume that $x_{i}\in S\left( q\right) ,$ $%
i\allowbreak =\allowbreak 1,2,3$ unless otherwise stated. Besides from Lemma %
\ref{pomoc} viii), ix) it follows that all considered in this paper series
for $\left\vert q\right\vert <1$ are absolutely convergent, hence we will
not repeat this statement unless it will be necessary.

\begin{remark}
Let us immediately observe that when $q\allowbreak =\allowbreak 1,$ that is
when $H_{n}\left( x|q\right) $ is substituted by $H_{n}\left( x\right) $ and 
$f_{N}\left( x_{i}|q\right) \allowbreak $ by $\allowbreak \exp \left(
-x_{i}^{2}/2\right) /\sqrt{2\pi },$ then by Kibble--Slepian formula (see
e.g. \cite{Slepian72} Example 2)) for $n\allowbreak =\allowbreak 3$ $,$ $%
f_{3D}\left( x_{1},x_{2},x_{3}|\rho _{12},\rho _{13},\rho _{23},1\right)
\allowbreak $ is the density of the Gaussian distribution $N\left( \mathbf{0}%
,\mathbf{\rho }\right) $.

Besides let us notice that the function $f_{3D}$ has the same symmetry with
respect to variables $x_{1},x_{2},x_{3},\rho _{12},\rho _{13},\rho _{23}$ as
the density of the Normal distribution $N\left( \mathbf{0},\mathbf{\rho }%
\right) .$
\end{remark}

\begin{remark}
In the literature functions somewhat similar to (\ref{3D1}-\ref{3D2}) were
considered. In \cite{is202} (formula 4.13) sums of the form $\sum_{k\geq 0}%
\frac{t^{k}}{[k]_{q}!}H_{n+k}\left( x|q\right) H_{k}\left( y|q\right)
H_{k}\left( z|q\right) $ for all $n$'s while in \cite{AIs85} (formula 2.5)
sum of the form $\sum_{m,n,k}\frac{x^{m}y^{n}z^{k}}{\left( q\right)
_{m}\left( q\right) _{n}\left( q\right) _{k}}W_{m+k}\left( a|q\right)
W_{n+k}\left( b|q\right) $ ( for $W_{n}$ see (\ref{Wn})) were analyzed. Both
these results refer to and are based on the beautiful results of Carlitz 
\cite{Carlitz72}. The obtained sums are very complicated, expressed through
the basic hypergeometric function $_{6}\phi _{5}$ or $_{3}\phi _{2}.$ These
results show that summing products of more than $2$ $q$-Hermite polynomials
is a very subtle and complicated task. On the other hand the results of
these paper dealing also with sums of products of $3$ $q$-Hermite
polynomials follow subtle path in this area leading to results of relatively
simple form. By the way our results also refer to above-mentioned results of
Carlitz.
\end{remark}

Let us present some immediate remarks concerning the function $f_{3D}.$

\begin{remark}
\label{density}i) 
\begin{equation*}
\int_{S\left( q\right) \times S\left( q\right) \times S\left( q\right)
}f_{3D}\left( x_{1},x_{2},x_{3}|\rho _{12},\rho _{13},\rho _{23},q\right)
dx_{1}dx_{2}dx_{3}\allowbreak =\allowbreak 1,
\end{equation*}

ii)%
\begin{gather*}
\int_{S\left( q\right) \times S\left( q\right) }f_{3D}\left(
x_{1},x_{2},x_{3}|\rho _{12},\rho _{13},\rho _{23},q\right)
dx_{1}dx_{2}\allowbreak =\allowbreak f_{N}\left( x_{3}|q\right) , \\
\int_{S\left( q\right) \times S\left( q\right) \times S\left( q\right)
}H_{n}\left( x_{1}|q\right) H_{m}\left( x_{2}|q\right) f_{3D}\left(
x_{1},x_{2},x_{3}|\rho _{12},\rho _{13},\rho _{23},q\right)
dx_{3}dx_{1}dx_{2} \\
=\allowbreak \left\{ 
\begin{array}{ccc}
0 & if & n\neq m \\ 
\rho _{12}^{n}\left[ n\right] _{q}! & if & m=n%
\end{array}%
\right.
\end{gather*}

and similarly for other pairs $\left( 1,3\right) $ and $\left( 2,3\right) .$

In particular we have:%
\begin{gather*}
\int_{S\left( q\right) \times S\left( q\right) \times S\left( q\right)
}x_{1}f_{3D}\left( x_{1},x_{2},x_{3}|\rho _{12},\rho _{13},\rho
_{23},q\right) dx_{1}dx_{2}dx_{3}\allowbreak =\allowbreak 0, \\
\int_{S\left( q\right) \times S\left( q\right) \times S\left( q\right)
}x_{1}^{2}f_{3D}\left( x_{1},x_{2},x_{3}|\rho _{12},\rho _{13},\rho
_{23},q\right) dx_{1}dx_{2}dx_{3}\allowbreak =\allowbreak 1
\end{gather*}%
\newline
and again similarly for the remaining indices $2$ and $3$.

iii) 
\begin{equation*}
\int_{S\left( q\right) }f_{3D}\left( x_{1},x_{2},x_{3}|\rho _{12},\rho
_{13},\rho _{23},q\right) dx_{3}\allowbreak =\allowbreak f_{CN}\left(
x_{1}|x_{2},\rho _{12},q\right) f_{N}\left( x_{2}|q\right) .
\end{equation*}

iv) If $\rho _{13}\allowbreak =\allowbreak \rho _{23}\allowbreak
=\allowbreak 0,$ then 
\begin{equation*}
f_{3D}\left( x_{1},x_{2},x_{3}|\rho _{12},\rho _{13},\rho _{23},q\right)
\allowbreak =\newline
\allowbreak f_{CN}\left( x_{1}|x_{2},\rho _{12},q\right) f_{N}\left(
x_{2}|q\right) f_{N}\left( x_{3}|q\right)
\end{equation*}
and similarly for other pairs $\left( 1,3\right) $ and $\left( 2,3\right) .$
\end{remark}

\begin{proof}
In all assertions we apply Lemma \ref{pomoc} iii), the fact that $%
H_{1}\left( x|q\right) \allowbreak =\allowbreak x$ and $H_{2}\left(
x|q\right) \allowbreak =\allowbreak x^{2}-1$ and also formulae (\ref%
{identity}) and Lemma \ref{pomoc} v). iv) follows directly from Lemma \ref%
{pomoc} iiv).
\end{proof}

As it follows from the above mentioned Remark that $f_{3D}$ is a serious
candidate for the $3$ dimensional density. It has nonnegative marginal
densities equal to the densities $f_{N}$ and $f_{N}f_{CN}.$ Moreover if $%
f_{3D}$ was a joint density of certain $3$ dimensional random vector it
would follow from Remark \ref{density} ii) that variance-covariance matrix
of this vector would be equal to $\left[ 
\begin{array}{ccc}
1 & \rho _{12} & \rho _{13} \\ 
\rho _{12} & 1 & \rho _{23} \\ 
\rho _{13} & \rho _{23} & 1%
\end{array}%
\right] .$

To analyze its properties deeper we will need the following Lemma.

\begin{lemma}
\label{UogCarl}Let us denote $\gamma _{m,k}\left( x,y|\rho ,q\right)
\allowbreak =\allowbreak \sum_{i=0}^{\infty }\frac{\rho ^{i}}{\left[ i\right]
_{q}!}H_{i+m}\left( x|q\right) H_{i+k}\left( y|q\right) \allowbreak .$

i) Then 
\begin{equation}
\gamma _{m,k}\left( x,y|\rho ,q\right) \allowbreak =\allowbreak \gamma
_{0,0}\left( x,y|\rho ,q\right) Q_{m,k}\left( x,y|\rho ,q\right) ,
\label{gamma_m_k}
\end{equation}%
where $Q_{m,k}$ is a polynomial in $x$ and $y$ of order at most $m+k.$ 
\newline
Further denote $C_{n}\left( x,y|\rho _{1},\rho _{2},\rho _{3},q\right)
\allowbreak =\allowbreak \sum_{i=0}^{n}\QATOPD[ ] {n}{i}_{q}\rho
_{1}^{n-i}\rho _{2}^{i}Q_{n-i},_{i}\left( x,y|\rho _{3,}q\right) .$ \newline
Then we have in particular:

ii) 
\begin{gather*}
Q_{k,m}\left( y,x|\rho ,q\right) \allowbreak =\allowbreak Q_{m,k}\left(
x,y|\rho ,q\right) \allowbreak \\
=\allowbreak \sum_{s=0}^{k}(-1)^{s}q^{\binom{s}{2}}\QATOPD[ ] {k}{s}_{q}\rho
^{s}H_{k-s}\left( y|q\right) P_{m+s}(x|y,\rho ,q)/(\rho ^{2})_{m+s},
\end{gather*}
for all $x,y\in S\left( q\right) $ and $q\in (-1,1],$

iii) 
\begin{equation*}
C_{n}\left( x,y|\rho _{1},\rho _{2},\rho _{3},q\right) \allowbreak
=\allowbreak \sum_{s=0}^{n}\QATOPD[ ] {n}{s}_{q}H_{n-s}\left( y|q\right)
P_{s}\left( x|y,\rho _{3},q\right) \rho _{1}^{n-s}\rho _{2}^{s}\left( \rho
_{1}\rho _{3}/\rho _{2}\right) _{s}/\left( \rho _{3}^{2}\right) _{s}.
\end{equation*}
\end{lemma}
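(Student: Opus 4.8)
The plan is to reduce the whole statement to a single ``shifted'' Poisson--Mehler identity and then to carry out formal $q$-series bookkeeping.

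\textbf{Step 1 (the crux: the case $k=0$).} I would first prove that for every $m\geq 0$
\begin{equation*}
\gamma _{m,0}\left( x,y|\rho ,q\right) =\gamma _{0,0}\left( x,y|\rho ,q\right) P_{m}\left( x|y,\rho ,q\right) /\left( \rho ^{2}\right) _{m},
\end{equation*}
which is exactly i) and ii) for $k=0$, with $Q_{m,0}=P_{m}\left( x|y,\rho ,q\right) /\left( \rho ^{2}\right) _{m}$. Both sides lie in $L^{2}\left( f_{N}\right)$, so it suffices to match their integrals against each $H_{n}\left( x|q\right)$, the family $\left\{ H_{n}\right\}$ being complete. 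For the left side, Lemma \ref{pomoc} iii) gives $\int_{S\left( q\right) }\gamma _{m,0}H_{n}f_{N}\,dx=\rho ^{n-m}H_{n-m}\left( y|q\right) [n]_{q}!/[n-m]_{q}!$ when $n\geq m$ and $0$ otherwise. For the right side, Lemma \ref{pomoc} vii) replaces $\gamma _{0,0}f_{N}$ by $f_{CN}$, so the integral equals $\frac{1}{\left( \rho ^{2}\right) _{m}}\int_{S\left( q\right) }H_{n}P_{m}f_{CN}\,dx$; expanding $H_{n}$ by Lemma \ref{pomoc} i) and invoking the orthogonality Lemma \ref{pomoc} iv) collapses this to $\QATOPD[ ] {n}{m}_{q}\rho ^{n-m}H_{n-m}\left( y|q\right) [m]_{q}!$, the norm $\left( \rho ^{2}\right) _{m}[m]_{q}!$ having cancelled the prefactor. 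The two expressions coincide, so the functions agree.

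\textbf{Step 2 (general $k$: parts i) and ii)).} I would then apply the Carlitz-type linearization (\ref{identyty2}) to the factor $H_{i+k}\left( y|q\right)$ in the series defining $\gamma _{m,k}$. Interchanging the (absolutely convergent) sums and simplifying $\QATOPD[ ] {i}{j}_{q}[j]_{q}!/[i]_{q}!=1/[i-j]_{q}!$ turns the inner sum over $i$ into $\gamma _{m+j,0}$, giving
\begin{equation*}
\gamma _{m,k}\left( x,y|\rho ,q\right) =\sum_{j=0}^{k}\left( -1\right) ^{j}q^{\binom{j}{2}}\QATOPD[ ] {k}{j}_{q}\rho ^{j}H_{k-j}\left( y|q\right) \gamma _{m+j,0}\left( x,y|\rho ,q\right) .
\end{equation*}
Substituting Step 1 factors out $\gamma _{0,0}$ and leaves precisely the polynomial $Q_{m,k}$ of ii). Its order is read off termwise: $P_{m+j}\left( x|y,\rho ,q\right)$ has $x$-degree $m+j\leq m+k$, while with $H_{k-j}\left( y|q\right)$ its $y$-degree is $\left( m+j\right) +\left( k-j\right) =m+k$, which proves i). The asserted symmetry $Q_{k,m}\left( y,x|\rho ,q\right) =Q_{m,k}\left( x,y|\rho ,q\right)$ is simply the manifest symmetry $\gamma _{k,m}\left( y,x|\rho ,q\right) =\gamma _{m,k}\left( x,y|\rho ,q\right)$ of the defining series together with the symmetry of $\gamma _{0,0}$.

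\textbf{Step 3 (part iii)).} Here I would insert the formula of ii) for the relevant $Q$ into the definition of $C_{n}$ and regroup the double sum according to the index $t$ of $P_{t}\left( x|y,\rho _{3},q\right)$. The decisive observation is that for fixed $t$ the Hermite index attached to the $y$-variable is constant, equal to $n-t$, so $C_{n}$ takes the form $\sum_{t}\left( \text{scalar}_{t}\right) H_{n-t}\left( y|q\right) P_{t}\left( x|y,\rho _{3},q\right) /\left( \rho _{3}^{2}\right) _{t}$. The scalar is a single finite sum of a product of two $q$-binomial coefficients against $\left( -1\right) ^{j}q^{\binom{j}{2}}$ and monomials in $\rho _{1},\rho _{2},\rho _{3}$. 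Applying the trinomial-revision identity $\QATOPD[ ] {n}{i}_{q}\QATOPD[ ] {n-i}{t-i}_{q}=\QATOPD[ ] {n}{t}_{q}\QATOPD[ ] {t}{i}_{q}$ pulls $\QATOPD[ ] {n}{t}_{q}$ out front, and the terminating $q$-binomial theorem $\sum_{j}\QATOPD[ ] {t}{j}_{q}\left( -1\right) ^{j}q^{\binom{j}{2}}z^{j}=\left( z;q\right) _{t}$ then converts the remaining sum into a Pochhammer factor of the form $\left( \rho _{1}\rho _{3}/\rho _{2}\right) _{t}$ accompanied by $\rho _{1}^{n-t}\rho _{2}^{t}$, which is exactly the claimed expression.

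\textbf{Main obstacle and technical points.} The genuine content is concentrated in Step 1; Steps 2--3 are bookkeeping built on the linearization (\ref{identyty2}) and standard $q$-binomial identities. Two cautions: every interchange of summation with integration is licensed by the absolute convergence supplied (for $\left\vert q\right\vert <1$) by Lemma \ref{pomoc} viii), ix), and the completeness argument of Step 1 is most safely phrased in $L^{2}\left( f_{N}\right)$. Finally, the densities and Poisson--Mehler expansion are available for $0<q<1$, whereas ii) and iii) are claimed on $\left( -1,1\right]$; since both sides of each identity are polynomials in $x,y$ whose coefficients depend continuously on $q$, the identities propagate from $0<q<1$ to the full range by continuity, the endpoint $q=1$ returning the classical Hermite case.
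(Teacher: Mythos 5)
Your proposal is correct in substance, and it is genuinely more self-contained than the paper's treatment: for parts i) and ii) the paper offers no proof at all, simply citing \cite{Szab3}, whereas you derive them from Lemma \ref{pomoc} alone — first the case $k=0$ by matching moments against the complete family $\left\{ H_{n}\right\} $ in $L^{2}\left( f_{N}\right) $ (both of your integral evaluations are right, and they agree because the $q$-binomial coefficient times $\left[ m\right] _{q}!$ equals $\left[ n\right] _{q}!/\left[ n-m\right] _{q}!$), and then the general case by applying the linearization (\ref{identyty2}) to $H_{i+k}\left( y|q\right) $, which indeed collapses the inner sum to $\rho ^{j}\gamma _{m+j,0}$. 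For part iii), by contrast, your Step 3 is essentially identical to the paper's own proof: substitute ii) into the definition of $C_{n}$, regroup by the index of the Al-Salam--Chihara factor (noting the $y$-Hermite index is then frozen at $n-t$), pull out the outer $q$-binomial coefficient by trinomial revision, and finish with the terminating $q$-binomial theorem. One remark: like the paper's own proof, you implicitly apply ii) with the two indices of $Q$ swapped relative to the literal definition of $C_{n}$ (your trinomial revision involves the coefficient with upper entry $n-i$, i.e. you expand $Q_{i,n-i}$ rather than $Q_{n-i,i}$); with the literal reading one obtains iii) with $\rho _{1}$ and $\rho _{2}$ interchanged. This indexing inconsistency is the paper's, not yours, and your choice is the one that yields the stated formula.

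The one step that needs repair is the extension from $0<q\leq 1$ to $q\in (-1,0]$. Continuity alone cannot do this: two functions of $q$ that are continuous and agree on $\left( 0,1\right) $ are forced to agree only on the closure of that interval, so you reach $q=0$ but not negative $q$; moreover $\gamma _{m,k}$ is not a polynomial in $x,y$, so the identity you are propagating is not literally a polynomial identity. The standard fix: clear the denominators $\left( \rho ^{2}\right) _{m+s}$, expand both sides of $\gamma _{m,k}=\gamma _{0,0}Q_{m,k}$ in powers of $\rho $, and observe that the coefficient of each fixed power of $\rho $ is a polynomial in $x,y$ whose coefficients are rational functions of $q$ without poles in $\left( -1,1\right) $; agreement for all $q\in \left( 0,1\right) $ and $x,y$ in a set with nonempty interior forces agreement of these rational functions, hence the identity holds for all $q\in \left( -1,1\right] $, $x,y\in S\left( q\right) $, $\left\vert \rho \right\vert <1$, wherever the series converge (which (\ref{ogr_H}) and Lemma \ref{pomoc} viii) guarantee). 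With "continuity" replaced by this coefficient identification, your argument is complete; note that iii) needs no such care, being a finite algebraic identity once ii) is granted.
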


\begin{proof}
Assertions i) and ii) are proved in \cite{Szab3}. Thus we will prove only
iii). We have $C_{n}\left( x,y|\rho _{1},\rho _{2},\rho _{3},q\right)
\allowbreak =\allowbreak \allowbreak $\newline
$\sum_{i=0}^{n}\QATOPD[ ] {n}{i}_{q}\rho _{1}^{n-i}\rho
_{2}^{i}\sum_{j=0}^{n-i}(-1)^{j}\QATOPD[ ] {n-i}{j}_{q}q^{\binom{j}{2}}\rho
_{3}^{j}H_{n-i-j}\left( y|q\right) P_{i+j}\left( x|y,\rho _{3},q\right)
/(\rho _{3}^{2})_{i+j}\allowbreak =\allowbreak $\newline
$\sum_{s=0}^{n}\QATOPD[ ] {n}{s}_{q}H_{n-s}\left( y|q\right) P_{s}\left(
x|y,\rho _{3},q\right) /\left( \rho _{3}^{2}\right) _{s}\sum_{j=0}^{s}\QATOPD%
[ ] {s}{j}_{q}(-1)^{j}q^{\binom{j}{2}}\rho _{1}^{n-s+j}\rho _{2}^{s-j}\rho
_{3}^{j}\allowbreak =\allowbreak $\newline
$\sum_{s=0}^{n}\QATOPD[ ] {n}{s}_{q}H_{n-s}\left( y|q\right) P_{s}\left(
x|y,\rho _{3},q\right) \rho _{1}^{n-s}\rho _{2}^{s}/\left( \rho
_{3}^{2}\right) _{s}\sum_{j=0}^{s}\QATOPD[ ] {s}{j}_{q}(-1)^{j}q^{\binom{j}{2%
}}(\rho _{1}\rho _{3}/\rho _{2})^{j}\allowbreak =\allowbreak $\newline
$\sum_{s=0}^{n}\QATOPD[ ] {n}{s}_{q}H_{n-s}\left( y|q\right) P_{s}\left(
x|y,\rho _{3},q\right) \rho _{1}^{n-s}\rho _{2}^{s}\left( \rho _{1}\rho
_{3}/\rho _{2}\right) _{s}/\left( \rho _{3}^{2}\right) _{s}.$ On the way we
have used the following identity $\left( a\right) _{n}\allowbreak
=\allowbreak \sum_{i=0}^{n}\QATOPD[ ] {n}{i}_{q}\left( -1\right) ^{i}q^{%
\binom{i}{2}}a^{j}.$
\end{proof}

We get immediate observations:

\begin{corollary}
\label{wn1}For all $n\geq 1$ we have:

i)%
\begin{equation*}
P_{n}\left( x|y,\rho ,q\right) \allowbreak =\allowbreak \left( \rho
^{2}\right) _{n}\sum_{i=0}^{n}\QATOPD[ ] {n}{i}_{q}\left( -1\right) ^{i}q^{%
\binom{i}{2}}\rho ^{i}H_{n-i}\left( x|q\right) P_{i}\left( y|x,\rho
,q\right) /\left( \rho ^{2}\right) _{i},
\end{equation*}

ii)%
\begin{equation*}
\int_{S\left( q\right) }P_{n}\left( x|y,\rho ,q\right) f_{CN}\left( y|x,\rho
,q\right) dy\allowbreak =\allowbreak \left( \rho ^{2}\right) _{n}H_{n}\left(
x|q\right) ,
\end{equation*}

iii) 
\begin{equation*}
C_{n}\left( x,y|\rho _{2}\rho _{3},\rho _{2},\rho _{3},q\right) \allowbreak
=\allowbreak \rho _{2}^{n}H_{n}\left( x|q\right) ,
\end{equation*}

iv) 
\begin{equation*}
C_{n}\left( x,y|\rho _{1},\rho _{1}\rho _{3},\rho _{3},q\right) \allowbreak
=\allowbreak \rho _{1}^{n}H_{n}\left( y|q\right) ,
\end{equation*}

v) 
\begin{eqnarray*}
C_{n}\left( x,y|\rho _{1},\rho _{2},0,q\right) \allowbreak &=&\allowbreak
\sum_{s=0}^{n}\QATOPD[ ] {n}{s}_{q}\rho _{1}^{n-s}\rho _{2}^{s}H_{n-s}\left(
y|q\right) H_{s}\left( x|q\right) , \\
C_{n}\left( x,y|0,\rho _{2},\rho _{3},q\right) \allowbreak &=&\allowbreak
\rho _{2}^{n}P_{n}\left( x|y,\rho _{3},q\right) /\left( \rho _{3}^{2}\right)
_{n}, \\
C_{n}\left( x,y|\rho _{1},0,\rho _{3},q\right) \allowbreak &=&\allowbreak
\rho _{1}^{n}P_{n}\left( y|x,\rho _{3},q\right) /\left( \rho _{3}^{2}\right)
_{n},
\end{eqnarray*}

vi)%
\begin{equation*}
C_{n}\left( x,y|\rho _{1},\rho _{2},\rho _{3},1\right) \allowbreak
=\allowbreak \left( \frac{\rho _{1}^{2}+\rho _{2}^{2}-2\rho _{1}\rho
_{2}\rho _{3}}{1-\rho _{3}^{2}}\right) ^{n/2}H_{n}\left( \frac{x(\rho
_{2}-\rho _{1}\rho _{3})+y(\rho _{1}-\rho _{2}\rho _{3})}{\sqrt{(1-\rho
_{3}^{2})(\rho _{1}^{2}+\rho _{2}^{2}-2\rho _{1}\rho _{2}\rho _{3})}}\right)
.
\end{equation*}
\end{corollary}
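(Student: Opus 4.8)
The plan is to treat parts i)--ii) as statements about the Al-Salam--Chihara polynomials alone, derived from the reflection symmetry of the auxiliary polynomials $Q_{m,k}$, and to obtain parts iii)--vi) as specializations of the closed form for $C_n$ recorded in Lemma \ref{UogCarl} iii), each time choosing the three correlation parameters so that the $q$-Pochhammer factor $(\rho_1\rho_3/\rho_2)_s/(\rho_3^2)_s$ collapses.

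For i), I would start from Lemma \ref{UogCarl} ii) with $m=n,k=0$, which gives $Q_{n,0}(x,y|\rho,q)=P_n(x|y,\rho,q)/(\rho^2)_n$, and from the same formula with $m=0,k=n$, which expresses $Q_{0,n}(x,y|\rho,q)$ as the alternating sum appearing on the right of i). The symmetry $Q_{m,k}(x,y|\rho,q)=Q_{k,m}(y,x|\rho,q)$ (immediate from the definition of $\gamma_{m,k}$, since interchanging the two factors $H_{i+m}(x|q)H_{i+k}(y|q)$ amounts to swapping $(m,x)\leftrightarrow(k,y)$) then identifies $Q_{0,n}(x,y)$ with $Q_{n,0}(y,x)=P_n(y|x,\rho,q)/(\rho^2)_n$. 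Equating the two expressions and interchanging the names of $x$ and $y$ yields i) after multiplying through by $(\rho^2)_n$. Part ii) is then immediate: integrate the identity in i) against $f_{CN}(y|x,\rho,q)\,dy$ and invoke the orthogonality relation of Lemma \ref{pomoc} iv) for the polynomials $P_s(\cdot|x,\rho,q)$; every term with $s\ge 1$ integrates to zero and the surviving $s=0$ term leaves $(\rho^2)_nH_n(x|q)$.

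For iii)--v) I would substitute into Lemma \ref{UogCarl} iii). Setting $\rho_1=\rho_2\rho_3$ makes $\rho_1\rho_3/\rho_2=\rho_3^2$, so $(\rho_1\rho_3/\rho_2)_s=(\rho_3^2)_s$ cancels the denominator and $\rho_1^{n-s}\rho_2^s=\rho_2^n\rho_3^{n-s}$; what remains is $\rho_2^n$ times the connection expansion of Lemma \ref{pomoc} i) (with $\rho=\rho_3$), i.e. $\rho_2^nH_n(x|q)$, proving iii). Setting $\rho_2=\rho_1\rho_3$ makes $\rho_1\rho_3/\rho_2=1$, and since $(1;q)_s=0$ for $s\ge1$ only the $s=0$ term survives, giving $\rho_1^nH_n(y|q)$, which is iv). For v) the first case $\rho_3=0$ uses $P_s(x|y,0,q)=H_s(x|q)$ (the recurrence (\ref{AlSC}) reduces to (\ref{He}) when $\rho=0$) together with $(0;q)_s=1$; the case $\rho_1=0$ forces $s=n$ because of the factor $\rho_1^{n-s}$; and the case $\rho_2=0$ is handled by letting $\rho_2\to0$, where $\rho_2^s(\rho_1\rho_3/\rho_2)_s\to(-1)^sq^{\binom{s}{2}}(\rho_1\rho_3)^s$, and recognizing the resulting alternating sum through the $Q$-polynomial identity of Lemma \ref{UogCarl} ii).

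The part that needs genuine work is vi), the value at $q=1$. Here I would insert $P_s(x|y,\rho_3,1)=(1-\rho_3^2)^{s/2}H_s\big((x-\rho_3y)/\sqrt{1-\rho_3^2}\big)$ and $(\rho_3^2;1)_s=(1-\rho_3^2)^s$ into the closed form, so that after simplification $C_n$ becomes $\sum_{s=0}^n\binom{n}{s}\rho_1^{n-s}b^sH_{n-s}(y)H_s\big((x-\rho_3y)/\sqrt{1-\rho_3^2}\big)$ with $b=(\rho_2-\rho_1\rho_3)/\sqrt{1-\rho_3^2}$. The decisive step is the classical Hermite addition formula $\sum_s\binom{n}{s}a^{n-s}b^sH_{n-s}(u)H_s(v)=(a^2+b^2)^{n/2}H_n\big((au+bv)/\sqrt{a^2+b^2}\big)$, applied with $a=\rho_1$, $u=y$, $v=(x-\rho_3y)/\sqrt{1-\rho_3^2}$. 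The remaining obstacle is purely algebraic: one must check that $a^2+b^2=(\rho_1^2+\rho_2^2-2\rho_1\rho_2\rho_3)/(1-\rho_3^2)$ and that $au+bv=(x(\rho_2-\rho_1\rho_3)+y(\rho_1-\rho_2\rho_3))/(1-\rho_3^2)$, after which the scaling factor and the argument of $H_n$ in vi) fall out. I expect this last simplification, together with correctly citing the Hermite addition identity, to be the only nontrivial point; parts i)--v) are essentially bookkeeping on top of the lemmas already proved.
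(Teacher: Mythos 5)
Your proposal is correct, and for parts ii)--vi) it coincides step for step with the paper's own proof: the same specializations of Lemma \ref{UogCarl} iii) (cancellation of $\left( \rho _{1}\rho _{3}/\rho _{2}\right) _{s}$ against $\left( \rho _{3}^{2}\right) _{s}$ when $\rho _{1}=\rho _{2}\rho _{3}$, the vanishing of $\left( 1;q\right) _{s}$ for $s\geq 1$ when $\rho _{2}=\rho _{1}\rho _{3}$, the evaluation of $\rho _{2}^{s}\left( \rho _{1}\rho _{3}/\rho _{2}\right) _{s}$ at $\rho _{2}=0$), the same use of the orthogonality relation of Lemma \ref{pomoc} iv) for part ii), and for part vi) exactly the same substitution $a=\rho _{1}$, $b=\left( \rho _{2}-\rho _{1}\rho _{3}\right) /\sqrt{1-\rho _{3}^{2}}$, $\xi =y$, $\theta =\left( x-\rho _{3}y\right) /\sqrt{1-\rho _{3}^{2}}$ into the classical Hermite addition formula, followed by the same algebraic identities for $a^{2}+b^{2}$ and $a\xi +b\theta $. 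The only genuine divergence is part i): the paper does not prove it but cites Corollary 2 of \cite{Szab3}, whereas you derive it internally by specializing Lemma \ref{UogCarl} ii) at $(m,k)=(n,0)$ and $(m,k)=(0,n)$ and invoking the symmetry $Q_{m,k}\left( x,y|\rho ,q\right) =Q_{k,m}\left( y,x|\rho ,q\right) $, which is indeed immediate from the definition of $\gamma _{m,k}$ and is even recorded in the statement of that lemma. This is a valid, self-contained argument and arguably preferable, since it keeps the corollary independent of the external reference; it also makes your handling of the third case of v) (recognizing the alternating sum as $Q_{0,n}$, equivalently as assertion i)) literally the same computation that the paper performs by quoting assertion i).
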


\begin{proof}
i) see Corollary 2 of \cite{Szab3} . ii) We use previous assertion and Lemma %
\ref{pomoc} iv). iii) We have $\left( \rho _{2}\rho _{3}^{2}/\rho
_{2}\right) _{s}\allowbreak \allowbreak =\allowbreak \left( \rho
_{3}^{2}\right) _{s}$, hence $C_{n}\left( x,y|\rho _{2}\rho _{3},\rho
_{2},\rho _{3},q\right) \allowbreak =\allowbreak \rho _{2}^{n}\sum_{s=0}^{n}%
\QATOPD[ ] {n}{s}_{q}H_{n-s}\left( y|q\right) P_{s}\left( x|y,\rho
_{3},q\right) \rho _{3}^{n-s}\allowbreak =\allowbreak \rho
_{2}^{n}H_{n}\left( x|q\right) $ by Lemma \ref{pomoc} i). iv) $\left( \rho
_{1}\rho _{3}/\rho _{1}\rho _{3}\right) _{s}\allowbreak =\left\{ 
\begin{array}{ccc}
1 & if & s=0 \\ 
0 & if & s>0%
\end{array}%
\right. \allowbreak $ v) First two statements are direct consequence of the
assumptions that $\rho _{3}\allowbreak =\allowbreak 0$ or $\rho
_{1}\allowbreak =\allowbreak 0.$ The third one follows the fact that $\rho
_{2}^{s}\left( \rho _{1}\rho _{3}/\rho _{2}\right) _{s}\allowbreak
=\allowbreak \prod_{i=1}^{s}\left( \rho _{2}-q^{i-1}\rho _{1}\rho
_{3}\right) ,$ which for $\rho _{2}\allowbreak =\allowbreak 0$ is gives $%
\left( -1\right) ^{s}q^{\binom{s}{2}}\rho _{1}^{s}\rho _{3}^{s}.$ Then we
apply assertion i) of this Corollary.

To get vi) first we notice that 
\begin{equation*}
P_{n}\left( x|y,\rho _{3},1\right) /\left( 1-\rho _{3}^{2}\right)
^{n}\allowbreak =\allowbreak H_{n}\left( \frac{x-\rho _{3}y}{\sqrt{1-\rho
_{3}^{2}}}\right) /\left( 1-\rho _{3}^{2}\right) ^{n/2}.
\end{equation*}%
Then we apply the well known (see e.g. \cite{AAR}) formula for addition of
Hermite polynomials :%
\begin{equation*}
\sum_{i=0}^{n}\binom{n}{i}a^{n-i}b^{i}H_{n-i}\left( \xi \right) H_{i}\left(
\theta \right) =\left( a^{2}+b^{2}\right) ^{n/2}H_{n}\left( \frac{a\xi
+b\theta }{\sqrt{a^{2}+b^{2}}}\right)
\end{equation*}%
with $a\allowbreak =\allowbreak \rho _{1},$ $b\allowbreak =\allowbreak \frac{%
\left( \rho _{2}-\rho _{1}\rho _{3}\right) }{\sqrt{1-\rho _{3}^{2}}},$ $\xi
\allowbreak =\allowbreak y$ and $\theta \allowbreak =\allowbreak \frac{%
x-\rho _{3}y}{\sqrt{1-\rho _{3}^{2}}}.$ Next we observe that $%
a^{2}+b^{2}\allowbreak =\allowbreak \frac{\rho _{1}^{2}+\rho _{2}^{2}-2\rho
_{1}\rho _{2}\rho _{3}}{1-\rho _{3}^{2}}$ and $a\xi +b\theta \allowbreak
=\allowbreak \frac{x(\rho _{2}-\rho _{1}\rho _{3})+y(\rho _{1}-\rho _{2}\rho
_{3})}{1-\rho _{3}^{2}}.$
\end{proof}

\section{Main results\label{main}}

Applying Lemma \ref{UogCarl} to the function $f_{3D}$ we have the following
Proposition:

\begin{proposition}
\label{proposition}i)%
\begin{gather}
f_{3D}(x_{1},x_{2},x_{3}|\rho _{12},\rho _{13},\rho _{23},q)=f_{CN}\left(
x_{3}|x_{1},\rho _{13},q\right) f_{N}\left( x_{1}|q\right) f_{N}\left(
x_{2}|q\right)  \label{exp in HC} \\
\times \sum_{s\geq 0}\frac{1}{\left[ s\right] _{q}!}H_{s}\left(
x_{2}|q\right) C_{s}\left( x_{1},x_{3}|\rho _{12},\rho _{23},\rho
_{13},q\right) ,  \notag
\end{gather}%
similarly for other pairs $\left( 1,3\right) $ and $\left( 2,3\right) ,$

ii) 
\begin{gather}
f_{3D}(x_{1},x_{2},x_{3}|\rho _{12},\rho _{13},\rho _{23},q)=f_{CN}\left(
x_{1}|x_{3},\rho _{13},q\right) f_{CN}\left( x_{3}|x_{2},\rho _{23},q\right)
f_{N}\left( x_{2}|q\right)  \label{exp_in_ASC} \\
\times \sum_{s=0}^{\infty }\frac{\rho _{12}^{s}\left( \rho _{13}\rho
_{23}/\rho _{12}\right) _{s}}{\left[ s\right] _{q}!\left( \rho
_{13}^{2},\rho _{23}^{2}\right) _{s}}P_{s}\left( x_{1}|x_{3},\rho
_{13},q\right) P_{s}\left( x_{2}|x_{3},\rho _{23},q\right) ,  \notag
\end{gather}%
similarly for other pairs $\left( 1,3\right) $ and $\left( 2,3\right)
.\allowbreak \allowbreak \allowbreak $
\end{proposition}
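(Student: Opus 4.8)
We want to prove Proposition 1. I'll analyze what each part says and how to derive it from the stated lemmas.

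Part (i) expresses $f_{3D}$ by peeling off the $H_{i+j}(x_1)H_{i+k}(x_3)$ factors (those carrying the index $i$ with weight $\rho_{13}^i$) into the two-variable Carlitz-type sum $\gamma_{j,k}(x_1,x_3|\rho_{13},q)$, while keeping $x_2$ separate.

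Part (ii) is the deeper statement: it re-sums $f_{3D}$ as a genuine Poisson–Mehler-type bilinear series in the Al-Salam–Chihara polynomials $P_s(x_1|x_3,\rho_{13},q)$ and $P_s(x_2|x_3,\rho_{23},q)$, with $x_3$ as the "hub" variable, producing two $f_{CN}$ factors and an explicit coefficient.

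Let me sketch each.

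\begin{proof}
We prove only the versions with $x_3$ as the distinguished variable; the other two follow by the symmetry of $f_{3D}$ noted in Remark~1.

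\emph{Proof of i).} Starting from the defining double application of \eqref{3D1}--\eqref{3D2}, the plan is to sum first over the index $i$ (the one weighted by $\rho_{13}$), holding $j$ and $k$ fixed. Grouping the factors that carry $i$, namely $H_{i+j}(x_1|q)H_{i+k}(x_3|q)\rho_{13}^{i}/[i]_q!$, one recognizes exactly the series $\gamma_{j,k}(x_1,x_3|\rho_{13},q)$ of Lemma~2. By Lemma~2 i) this equals $\gamma_{0,0}(x_1,x_3|\rho_{13},q)\,Q_{j,k}(x_1,x_3|\rho_{13},q)$, and by Lemma~1 vii) the prefactor $\gamma_{0,0}$ is $f_{CN}(x_1|x_3,\rho_{13},q)/f_N(x_1|q)$ after multiplying back the marginal densities $\prod_i f_N(x_i|q)$; equivalently, using the symmetry of $f_{CN}$ in its first two arguments via Lemma~1 vi)/v), one arrives at $f_{CN}(x_3|x_1,\rho_{13},q)f_N(x_1|q)f_N(x_2|q)$. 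What remains is the double sum over $j,k$ of $\rho_{12}^{j}\rho_{23}^{k}H_{j+k}(x_2|q)Q_{j,k}(x_1,x_3|\rho_{13},q)/([j]_q![k]_q!)$. The final step is to collapse this into a single sum over $s=j+k$: expanding $H_{j+k}(x_2|q)$ is not needed — instead one fixes $s$ and, writing $j=s-i$, $k=i$, recognizes the inner sum $\sum_{i}\QATOPD[ ]{s}{i}_q\rho_{12}^{s-i}\rho_{23}^{i}Q_{s-i,i}(x_1,x_3|\rho_{13},q)$ as precisely $C_s(x_1,x_3|\rho_{12},\rho_{23},\rho_{13},q)$ from Lemma~2, each term carrying the factor $H_s(x_2|q)/[s]_q!$ after the standard $q$-binomial bookkeeping $[j]_q![k]_q!\QATOPD[ ]{s}{i}_q=[s]_q!$. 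This yields \eqref{exp in HC}.

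\emph{Proof of ii).} Here I start from the representation \eqref{exp in HC} just obtained and substitute the closed form of $C_s$ given in Lemma~2 iii) with the identification $(\rho_1,\rho_2,\rho_3)=(\rho_{12},\rho_{23},\rho_{13})$: this writes $C_s(x_1,x_3|\rho_{12},\rho_{23},\rho_{13},q)$ as a single sum $\sum_{t=0}^{s}\QATOPD[ ]{s}{t}_q H_{s-t}(x_3|q)P_t(x_1|x_3,\rho_{13},q)\rho_{12}^{s-t}\rho_{23}^{t}(\rho_{12}\rho_{13}/\rho_{23})_t/(\rho_{13}^2)_t$. Inserting this into \eqref{exp in HC} and interchanging the $s$- and $t$-sums, the plan is to perform the summation over the free index $s-t$ first: the factors depending on it are $H_s(x_2|q)H_{s-t}(x_3|q)\rho_{23}^{\,s-t}$ (after relabeling), and upon summing against $1/[s]_q!$ with the $q$-binomial weight one again meets a Carlitz-type series in $x_2,x_3$. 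The key is that this residual sum reassembles — via Lemma~2 i)--ii) and the Poisson–Mehler kernel Lemma~1 vii) — into the second ASC polynomial $P_s(x_2|x_3,\rho_{23},q)$ together with the extra factor $f_{CN}(x_3|x_2,\rho_{23},q)$. After collecting the Pochhammer coefficients $(\rho_{13}\rho_{23}/\rho_{12})_s/(\rho_{13}^2,\rho_{23}^2)_s$ and the power $\rho_{12}^s/[s]_q!$, one obtains \eqref{exp_in_ASC}.

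\emph{Main obstacle.} I expect the delicate step to be the re-summation in part ii): one must interchange the order of two infinite/finite sums, relabel indices, and correctly track four interacting Pochhammer symbols $(\rho_{12}\rho_{13}/\rho_{23})_t$, $(\rho_{13}^2)_t$, $(\rho_{13}\rho_{23}/\rho_{12})_s$, and $(\rho_{13}^2,\rho_{23}^2)_s$, verifying that the symmetric expression in Lemma~2 ii) for $Q_{m,k}$ combines with the $q$-binomial identity $(a)_n=\sum_i\QATOPD[ ]{n}{i}_q(-1)^iq^{\binom{i}{2}}a^i$ (used already in Lemma~2's proof) to collapse everything to a single ASC factor plus the $f_{CN}$ prefactor. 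Absolute convergence, guaranteed by Lemma~1 viii)--ix), justifies every rearrangement for $|q|<1$, and the case $q=1$ follows by continuity.
\end{proof}
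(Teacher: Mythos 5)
Your part i) is correct and is essentially the paper's own argument: sum over the index $i$ first, recognize $\gamma _{j,k}\left( x_{1},x_{3}|\rho _{13},q\right) \allowbreak =\allowbreak \gamma _{0,0}Q_{j,k}$ by Lemma \ref{UogCarl} i), identify $\gamma _{0,0}$ with $f_{CN}/f_{N}$ by Lemma \ref{pomoc} vii), and collapse the $\left( j,k\right) $-sum along $s\allowbreak =\allowbreak j+k$ into $C_{s}$.

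Part ii) is where you genuinely depart from the paper, and where your proposal has a gap. The paper does \emph{not} derive (\ref{exp_in_ASC}) from (\ref{exp in HC}); it returns to the triple sum (\ref{3D1})--(\ref{3D2}), splits $H_{i+k}\left( x_{3}|q\right) $ by the linearization identity (\ref{identyty2}), uses only the kernels $\gamma _{0,m}$ (for which the index pairing of $Q_{m,k}$ is harmless), and finishes with $\sum_{j}\QATOPD[ ] {s}{j}_{q}(-1)^{j}q^{\binom{j}{2}}a^{j}\allowbreak =\allowbreak \left( a\right) _{s}$. Your plan --- substitute Lemma \ref{UogCarl} iii) into i) and re-sum --- is viable in principle and would be shorter, but as written it fails at exactly the step you defer. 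Concretely: in the expansion of $C_{s}\left( x_{1},x_{3}|\rho _{12},\rho _{23},\rho _{13},q\right) $ that you display, the factor $H_{s-t}\left( x_{3}|q\right) $ carries $\rho _{12}^{s-t}$, not $\rho _{23}^{s-t}$ as you later assert. Following your displayed formula, the residual series is $\sum_{m\geq 0}\frac{\rho _{12}^{m}}{\left[ m\right] _{q}!}H_{m+t}\left( x_{2}|q\right) H_{m}\left( x_{3}|q\right) \allowbreak =\allowbreak \gamma _{t,0}\left( x_{2},x_{3}|\rho _{12},q\right) $, which by Lemma \ref{UogCarl} i)--ii) reassembles into $\frac{f_{CN}\left( x_{2}|x_{3},\rho _{12},q\right) }{f_{N}\left( x_{2}|q\right) }\cdot \frac{P_{t}\left( x_{2}|x_{3},\rho _{12},q\right) }{\left( \rho _{12}^{2}\right) _{t}}$: both the second ASC polynomial and the second $f_{CN}$ factor come out with parameter $\rho _{12}$, not $\rho _{23}$, so you do not obtain (\ref{exp_in_ASC}). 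Worse, the identity so obtained is false: putting $\rho _{23}\allowbreak =\allowbreak \rho _{12}\rho _{13}$ kills all terms $t\geq 1$ and it would read $f_{3D}\allowbreak =\allowbreak f_{CN}\left( x_{1}|x_{3},\rho _{13},q\right) f_{N}\left( x_{3}|q\right) f_{CN}\left( x_{2}|x_{3},\rho _{12},q\right) $, contradicting the Markov factorization $f_{3D}\allowbreak =\allowbreak f_{CN}\left( x_{2}|x_{1},\rho _{12},q\right) f_{CN}\left( x_{1}|x_{3},\rho _{13},q\right) f_{N}\left( x_{3}|q\right) $, visibly different already in the Gaussian case $q\allowbreak =\allowbreak 1$. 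The root of the trouble is that Lemma \ref{UogCarl} iii) as printed is inconsistent with the definition of $C_{n}$ (the paper's proof of iii) tacitly expands $Q_{i,n-i}$ rather than $Q_{n-i,i}$); the version consistent with the $C_{s}$ of part i) has $\rho _{1},\rho _{2}$ interchanged, i.e. $C_{n}\left( x,y|\rho _{1},\rho _{2},\rho _{3},q\right) \allowbreak =\allowbreak \sum_{t}\QATOPD[ ] {n}{t}_{q}H_{n-t}\left( y|q\right) P_{t}\left( x|y,\rho _{3},q\right) \rho _{2}^{n-t}\rho _{1}^{t}\left( \rho _{2}\rho _{3}/\rho _{1}\right) _{t}/\left( \rho _{3}^{2}\right) _{t}$, and only with this pairing does your re-summation produce $\gamma _{t,0}\left( x_{2},x_{3}|\rho _{23},q\right) $, the coefficient $\rho _{12}^{t}\left( \rho _{13}\rho _{23}/\rho _{12}\right) _{t}$, and hence (\ref{exp_in_ASC}). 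Since you neither carried out the re-summation nor detected and resolved this pairing issue --- which you yourself flag as the main obstacle --- part ii) of your proposal is incomplete at its only nontrivial step.
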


\begin{proof}
Lengthy proof is shifted to section \ref{dowody}.
\end{proof}

As an immediate corollary we have the following formula:

\begin{corollary}
\begin{gather*}
\sum_{s=0}^{\infty }\frac{\rho _{12}^{s}\left( \rho _{13}\rho _{23}/\rho
_{12}\right) _{s}}{\left[ s\right] _{q}!\left( \rho _{13}^{2},\rho
_{23}^{2}\right) _{s}}P_{s}\left( x_{1}|x_{3},\rho _{13},q\right)
P_{s}\left( x_{2}|x_{3},\rho _{23},q\right) \\
=\frac{f_{CN}\left( x_{1}|x_{2},\rho _{12},q\right) }{f_{CN}\left(
x_{1}|x_{3},\rho _{13},q\right) }\sum_{k\geq 0}\frac{\rho _{13}^{k}\left(
\rho _{12}\rho _{23}/\rho _{13}\right) _{k}}{\left[ k\right] _{q}!\left(
\rho _{13}^{2},\rho _{23}^{2}\right) _{s}}P_{k}\left( x_{1}|x_{2},\rho
_{12},q\right) P_{k}\left( x_{3}|x_{2},\rho _{23},q\right) .
\end{gather*}
\end{corollary}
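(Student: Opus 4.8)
The plan is to read off the corollary by writing $f_{3D}$ in two different ways via Proposition \ref{proposition} ii), and then using the symmetry of the Poisson--Mehler kernel to simplify the resulting ratio of densities. Since the statement is announced as ``an immediate corollary,'' the work is essentially bookkeeping: two substitutions into the already-proved proposition plus one symmetry identity.

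First I would take the displayed instance of (\ref{exp_in_ASC}), in which $x_{3}$ is the ``central'' variable, and solve for the sum on its right-hand side, which is precisely the left-hand side of the corollary:
\[
\mathrm{LHS}=\frac{f_{3D}(x_{1},x_{2},x_{3}|\rho _{12},\rho _{13},\rho _{23},q)}{f_{CN}\left( x_{1}|x_{3},\rho _{13},q\right) f_{CN}\left( x_{3}|x_{2},\rho _{23},q\right) f_{N}\left( x_{2}|q\right) }.
\]
Next I would invoke the ``similarly for other pairs'' instance of (\ref{exp_in_ASC}) in which $x_{2}$ is central (so the conditioning edges are $(1,2)$ and $(2,3)$, and $\rho _{13}$ is the remaining coefficient); this represents $f_{3D}$ as $f_{CN}\left( x_{1}|x_{2},\rho _{12},q\right) f_{CN}\left( x_{2}|x_{3},\rho _{23},q\right) f_{N}\left( x_{3}|q\right) $ times the sum over $k$ appearing on the right-hand side of the corollary (with $P_{k}\left( x_{1}|x_{2},\rho _{12},q\right) $, $P_{k}\left( x_{3}|x_{2},\rho _{23},q\right) $ and Pochhammer factors $(\rho _{12}^{2},\rho _{23}^{2})_{k}$). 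Dividing the first representation by the second and cancelling the common factor $f_{3D}$ gives
\[
\mathrm{LHS}=\frac{f_{CN}\left( x_{1}|x_{2},\rho _{12},q\right) f_{CN}\left( x_{2}|x_{3},\rho _{23},q\right) f_{N}\left( x_{3}|q\right) }{f_{CN}\left( x_{1}|x_{3},\rho _{13},q\right) f_{CN}\left( x_{3}|x_{2},\rho _{23},q\right) f_{N}\left( x_{2}|q\right) }\times (\text{sum over }k).
\]

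The one step that is not pure substitution is simplifying this density prefactor, and that is where I would use Lemma \ref{pomoc} vii): its left-hand side $\sum_{n}\frac{\rho ^{n}}{\left[ n\right] _{q}!}H_{n}\left( x|q\right) H_{n}\left( y|q\right) $ is manifestly symmetric in $x$ and $y$, which yields the identity $f_{CN}\left( x|y,\rho ,q\right) f_{N}\left( y|q\right) =f_{CN}\left( y|x,\rho ,q\right) f_{N}\left( x|q\right) $ for all admissible arguments. Applying it with $x=x_{2}$, $y=x_{3}$, $\rho =\rho _{23}$ shows that the factor $f_{CN}\left( x_{2}|x_{3},\rho _{23},q\right) f_{N}\left( x_{3}|q\right) $ in the numerator equals $f_{CN}\left( x_{3}|x_{2},\rho _{23},q\right) f_{N}\left( x_{2}|q\right) $ in the denominator, so these cancel and the prefactor collapses to $f_{CN}\left( x_{1}|x_{2},\rho _{12},q\right) /f_{CN}\left( x_{1}|x_{3},\rho _{13},q\right) $, which is exactly the corollary.

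I do not expect a genuine obstacle here; the points requiring care are purely organizational. One must choose the correct ``central variable'' instance of (\ref{exp_in_ASC}) so that the summation index, the polynomials, and the $q$-Pochhammer factors on the right match the claimed sum over $k$, and one must keep track of which argument of each $f_{CN}$ is the conditioning one so that the symmetry identity cancels the intended pair of factors. The absolute convergence guaranteed by Lemma \ref{pomoc} viii), ix) justifies treating these series as ordinary functions and dividing them termwise.
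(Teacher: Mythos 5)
Your proposal is correct and follows essentially the same route as the paper: both apply Proposition \ref{proposition} ii) twice (once with $x_{3}$ as the central variable to isolate the left-hand sum, once with $x_{2}$ central to represent $f_{3D}$ via the sum over $k$), divide the two representations, and collapse the density prefactor using the symmetry $f_{CN}\left( x_{2}|x_{3},\rho _{23},q\right) f_{N}\left( x_{3}|q\right) =f_{CN}\left( x_{3}|x_{2},\rho _{23},q\right) f_{N}\left( x_{2}|q\right) $, which you additionally (and correctly) justify from the symmetry of Lemma \ref{pomoc} vii). One minor point in your favor: your bookkeeping yields the Pochhammer factor $\left( \rho _{12}^{2},\rho _{23}^{2}\right) _{k}$ in the second sum, whereas the paper's statement prints $\left( \rho _{13}^{2},\rho _{23}^{2}\right) _{s}$ with a stray index $s$ --- an evident typo, since the permuted instance of Proposition \ref{proposition} ii) with $x_{2}$ central produces exactly the factors you wrote.
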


\begin{proof}
From Proposition \ref{proposition} ii) we get 
\begin{gather*}
\sum_{s=0}^{\infty }\frac{\rho _{12}^{s}\left( \rho _{13}\rho _{23}/\rho
_{12}\right) _{s}}{\left[ s\right] _{q}!\left( \rho _{13}^{2},\rho
_{23}^{2}\right) _{s}}P_{s}\left( x_{1}|x_{3},\rho _{13},q\right)
P_{s}\left( x_{2}|x_{3},\rho _{23},q\right) = \\
\frac{f_{CN}\left( x_{1}|x_{2},\rho _{12},q\right) f_{CN}\left(
x_{2}|x_{3},\rho _{23},q\right) f_{N}\left( x_{3}|q\right) }{f_{CN}\left(
x_{1}|x_{3},\rho _{13},q\right) f_{CN}\left( x_{3}|x_{2},\rho _{23},q\right)
f_{N}\left( x_{2}|q\right) } \\
\times \sum_{k\geq 0}\frac{\rho _{13}^{k}\left( \rho _{12}\rho _{23}/\rho
_{13}\right) _{k}}{\left[ k\right] _{q}!\left( \rho _{13}^{2},\rho
_{23}^{2}\right) _{s}}P_{k}\left( x_{1}|x_{2},\rho _{12},q\right)
P_{k}\left( x_{3}|x_{2},\rho _{23},q\right) .
\end{gather*}%
Now we use the fact that: $f_{CN}\left( x_{2}|x_{3},\rho _{23},q\right)
f_{N}\left( x_{3}|q\right) \allowbreak =\allowbreak f_{CN}\left(
x_{3}|x_{2},\rho _{23},q\right) f_{N}\left( x_{2}|q\right) .$
\end{proof}

\begin{corollary}
If $\rho _{12}\allowbreak =\allowbreak \rho _{13}\rho _{23}$ then\newline
i) 
\begin{equation*}
f_{3D}\left( x_{1},x_{2},x_{3}|\rho _{13}\rho _{23},\rho _{13},\rho
_{23},q\right) =f_{CN}\left( x_{1}|x_{3},\rho _{13},q\right) f_{CN}\left(
x_{3}|x_{2},\rho _{23},q\right) f_{N}\left( x_{2}|q\right) ,
\end{equation*}%
ii) 
\begin{equation}
\sum_{s\geq 0}\frac{\rho _{13}^{3}}{\left[ s\right] _{q}!\left( \rho
_{13}^{2}\rho _{23}^{2}\right) _{s}}P_{s}\left( x_{1}|x_{2},\rho _{13}\rho
_{23},q\right) P_{s}\left( x_{3}|x_{2},\rho _{23},q\right) =\frac{%
f_{CN}\left( x_{1}|x_{3},\rho _{13},q\right) }{f_{CN}\left( x_{1}|x_{2},\rho
_{13}\rho _{23},q\right) }.  \label{rozwiniecie}
\end{equation}

If $\rho _{12}\allowbreak =\allowbreak 0$ then\newline
iii) 
\begin{eqnarray}
&&\sum_{s=0}^{\infty }\frac{\left( -1\right) ^{s}q^{\binom{s}{2}}\rho
_{13}^{s}\rho _{23}^{s}}{\left[ s\right] _{q}!\left( \rho _{13}^{2},\rho
_{23}^{2}\right) _{s}}P_{s}\left( x_{1}|x_{3},\rho _{13},q\right)
P_{s}\left( x_{2}|x_{3},\rho _{23},q\right)   \label{rho=0} \\
&=&\frac{f_{N}\left( x_{1}|q\right) }{f_{CN}\left( x_{1}|x_{3},\rho
_{13},q\right) }\sum_{k\geq 0}\frac{\rho _{13}^{k}}{\left[ k\right]
_{q}!\left( \rho _{23}^{2}\right) _{k}}H_{k}\left( x_{1}|q\right)
P_{k}\left( x_{3}|x_{2},\rho _{23},q\right) .  \notag
\end{eqnarray}
\end{corollary}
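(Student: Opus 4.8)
The plan is to read all three assertions as one-line specializations of Proposition \ref{proposition} ii) and of the Corollary immediately preceding the present statement, obtained by inserting the prescribed value of $\rho _{12}$ and watching the $q$-Pochhammer factors degenerate. No new summation identity is needed: the whole content is that the factor $\left( \rho _{13}\rho _{23}/\rho _{12}\right) _{s}$ occurring in those expansions collapses very cleanly at the two values $\rho _{12}=\rho _{13}\rho _{23}$ and $\rho _{12}=0$.

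For i) I would substitute $\rho _{12}=\rho _{13}\rho _{23}$ into (\ref{exp_in_ASC}). Then $\rho _{13}\rho _{23}/\rho _{12}=1$, so the $s$-th summand carries the factor $\left( 1;q\right) _{s}=\prod_{i=0}^{s-1}\left( 1-q^{i}\right) $, which vanishes for every $s\geq 1$ because of the $i=0$ factor $1-q^{0}=0$. Only the $s=0$ term survives, and there every factor reduces to $1$ (recall $P_{0}\equiv 1$), so the whole sum equals $1$. What remains is $f_{CN}\left( x_{1}|x_{3},\rho _{13},q\right) f_{CN}\left( x_{3}|x_{2},\rho _{23},q\right) f_{N}\left( x_{2}|q\right) $, which is i).

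Both remaining identities come from inserting the prescribed $\rho _{12}$ into the preceding Corollary. For ii) I take $\rho _{12}=\rho _{13}\rho _{23}$: the left-hand sum then carries $\left( \rho _{13}\rho _{23}/\rho _{12}\right) _{s}=\left( 1;q\right) _{s}$ and collapses to its $s=0$ value $1$, while on the right $\left( \rho _{12}\rho _{23}/\rho _{13}\right) _{k}$ becomes $\left( \rho _{23}^{2}\right) _{k}$ and cancels the matching denominator factor and $\left( \rho _{12}^{2}\right) _{k}$ becomes $\left( \rho _{13}^{2}\rho _{23}^{2}\right) _{k}$, so the $k$-th coefficient reduces to $\rho _{13}^{k}/([k]_{q}!\left( \rho _{13}^{2}\rho _{23}^{2}\right) _{k})$ and the surviving sum is exactly the left-hand side of (\ref{rozwiniecie}); transposing the prefactor $f_{CN}\left( x_{1}|x_{2},\rho _{13}\rho _{23},q\right) /f_{CN}\left( x_{1}|x_{3},\rho _{13},q\right) $ then yields (\ref{rozwiniecie}). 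For iii) I take $\rho _{12}=0$. The one delicate point is the coefficient $\rho _{12}^{s}\left( \rho _{13}\rho _{23}/\rho _{12}\right) _{s}$, which is only apparently singular: in the product form $\prod_{i=1}^{s}\left( \rho _{12}-q^{i-1}\rho _{13}\rho _{23}\right) $ used in the proof of Corollary \ref{wn1} v) it is a genuine polynomial in $\rho _{12}$ and equals $\left( -1\right) ^{s}q^{\binom{s}{2}}\rho _{13}^{s}\rho _{23}^{s}$ at $\rho _{12}=0$, matching the coefficient on the left of (\ref{rho=0}). On the right one uses the $\rho =0$ degenerations $f_{CN}\left( x_{1}|x_{2},0,q\right) =f_{N}\left( x_{1}|q\right) $ and $P_{k}\left( x_{1}|x_{2},0,q\right) =H_{k}\left( x_{1}|q\right) $, both immediate from the recurrence (\ref{AlSC}) and the density (\ref{_1}) at $\rho =0$, together with $\left( \rho _{12}\rho _{23}/\rho _{13}\right) _{k}=\left( 0;q\right) _{k}=1$ and $\left( \rho _{12}^{2}\right) _{k}=\left( 0;q\right) _{k}=1$, which turns the right-hand side into exactly that of (\ref{rho=0}). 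Either identity can equally be obtained without the intermediate Corollary, by equating (\ref{exp_in_ASC}) with its image under the symmetry $x_{2}\leftrightarrow x_{3},\ \rho _{12}\leftrightarrow \rho _{13}$ of $f_{3D}$ and cancelling the common factor $f_{CN}\left( x_{3}|x_{2},\rho _{23},q\right) f_{N}\left( x_{2}|q\right) =f_{CN}\left( x_{2}|x_{3},\rho _{23},q\right) f_{N}\left( x_{3}|q\right) $.

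The real work here is bookkeeping rather than invention. One must justify performing these substitutions termwise inside the infinite series, which the absolute convergence furnished by Lemma \ref{pomoc} viii), ix) provides, and one must line up the $q$-Pochhammer factors correctly after substituting, in particular using that the relevant denominator factor in the preceding Corollary is $\left( \rho _{12}^{2},\rho _{23}^{2}\right) _{k}$, the value forced by the $x_{2}\leftrightarrow x_{3}$ symmetry of Proposition \ref{proposition} ii). Everything else, namely $\left( 1;q\right) _{s}=0$ for $s\geq 1$, $\left( 0;q\right) _{k}=1$, the product form of the apparently singular coefficient, and the reversibility of $f_{CN}$, is elementary.
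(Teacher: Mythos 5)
Your proposal is correct and takes essentially the same route as the paper: part i) via the collapse $\left(1;q\right)_{s}\allowbreak =\allowbreak 0$ for $s\geq 1$ in (\ref{exp_in_ASC}), and parts ii), iii) by specializing the index-permuted form of Proposition \ref{proposition} ii) (equivalently the preceding Corollary), including the same product-form resolution $\rho_{12}^{s}\left(\rho_{13}\rho_{23}/\rho_{12}\right)_{s}\allowbreak =\allowbreak \prod_{i=1}^{s}\left(\rho_{12}-q^{i-1}\rho_{13}\rho_{23}\right)$ of the apparently singular coefficient at $\rho_{12}\allowbreak =\allowbreak 0$. Your reading of the denominator as $\left(\rho_{12}^{2},\rho_{23}^{2}\right)_{k}$ (forced by the $x_{2}\leftrightarrow x_{3}$ symmetry) is exactly what the paper's argument implicitly uses, so this is a faithful reconstruction rather than a divergence.
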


\begin{proof}
i) We use Corollary \ref{wn1} iii) and deduce that $\sum_{s\geq 0}\frac{1}{%
\left[ s\right] _{q}!}H_{s}\left( x_{2}|q\right) C_{s}\left(
x_{1},x_{3}|\rho _{12},\rho _{23},\rho _{13},q\right) \allowbreak
=\allowbreak \sum_{s\geq 0}\frac{\rho _{23}^{s}}{\left[ s\right] _{q}!}%
H_{s}\left( x_{2}|q\right) H_{s}\left( x_{3}|q\right) \allowbreak
=\allowbreak f_{CN}\left( x_{3}|x_{2},\rho _{23},q\right) /f_{N}\left(
x_{3}|q\right) $ (by Poisson-Mehler formula) or we notice that $\left( \rho
_{13}\rho _{23}/\rho _{12}\right) _{s}$ when $\rho _{12}\allowbreak
=\allowbreak \rho _{13}\rho _{23}$ is equal to $0$ for $s\geq 1$ and use (%
\ref{exp_in_ASC}).

ii) We use equivalent form of (\ref{exp_in_ASC}) that is%
\begin{eqnarray*}
f_{3D}(x_{1},x_{2},x_{3}|\rho _{12},\rho _{13},\rho _{23},q) &=&f_{CN}\left(
x_{1}|x_{2},\rho _{12},q\right) f_{CN}\left( x_{2}|x_{3},\rho _{23},q\right)
f_{N}\left( x_{3}|q\right)  \\
&&\times \sum_{s=0}^{\infty }\frac{\rho _{13}^{s}\left( \rho _{12}\rho
_{23}/\rho _{13}\right) _{s}}{\left[ s\right] _{q}!\left( \rho
_{13}^{2},\rho _{23}^{2}\right) _{s}}P_{s}\left( x_{1}|x_{2},\rho _{13}\rho
_{23},q\right) P_{s}\left( x_{3}|x_{2},\rho _{23},q\right) 
\end{eqnarray*}%
apply assumption, observing that then $\left( \rho _{12}\rho _{23}/\rho
_{13}\right) _{s}\allowbreak =\allowbreak \left( \rho _{23}^{2}\right) _{s}$
and using the assertion i) of this Corollary.

iii) We use the fact that $\rho _{12}^{s}\left( \rho _{13}\rho _{23}/\rho
_{12}\right) _{s}\allowbreak =\allowbreak \prod_{i=1}^{s}(\rho
_{12}-q^{i-1}\rho _{13}\rho _{23})$ which for $\rho _{12}\allowbreak
=\allowbreak 0$ equals to $\left( -1\right) ^{s}q^{1+\ldots +s-1}\rho
_{13}^{s}\rho _{23}^{s}$ and the fact that $P_{s}\left( x|y,0,q\right)
\allowbreak =\allowbreak H_{s}\left( x|q\right) .$
\end{proof}

We have also the following remark concerning the ordinary, probabilistic
Hermite polynomials

\begin{remark}
\label{gaus}We have%
\begin{gather}
\sqrt{\frac{(1-\rho _{13}^{2})(1-\rho _{23}^{2})-\left( \rho _{12}-\rho
_{13}\rho _{23}\right) ^{2}}{(1-\rho _{13}^{2})(1-\rho _{23}^{2})}}
\label{gaus2} \\
\times \sum_{s=0}^{\infty }\frac{\left( \rho _{12}-\rho _{13}\rho
_{23}\right) ^{s}}{s!(1-\rho _{13}^{2})^{s/2}(1-\rho _{23}^{2})^{s/2}}%
H_{s}\left( \frac{x_{1}-\rho _{13}x_{3}}{\sqrt{1-\rho _{13}^{2}}}\right)
H_{s}\left( \frac{x_{2}-\rho _{23}x_{3}}{\sqrt{1-\rho _{23}^{2}}}\right) 
\notag \\
=\exp \left( -\frac{1}{2}\left[ 
\begin{array}{ccc}
x_{1} & x_{2} & x_{3}%
\end{array}%
\right] \left[ \left[ 
\begin{array}{ccc}
1 & \rho _{12} & \rho _{13} \\ 
\rho _{12} & 1 & \rho _{23} \\ 
\rho _{13} & \rho _{23} & 1%
\end{array}%
\right] ^{-1}-\left[ 
\begin{array}{ccc}
1 & \rho _{13}\rho _{23} & \rho _{13} \\ 
\rho _{13}\rho _{23} & 1 & \rho _{23} \\ 
\rho _{13} & \rho _{23} & 1%
\end{array}%
\right] ^{-1}\right] \left[ 
\begin{array}{c}
x_{1} \\ 
x_{2} \\ 
x_{3}%
\end{array}%
\right] \right)  \notag
\end{gather}
\end{remark}

\begin{proof}
We use the fact that $P_{n}\left( x|y,\rho ,1\right) /\left( \rho
^{2}|1\right) _{n}\allowbreak =\allowbreak H_{n}\left( \frac{x-\rho y}{\sqrt{%
1-\rho ^{2}}}\right) /(1-\rho ^{2})^{n/2}$. Then $\sum_{s=0}^{\infty }\frac{%
\left( \rho _{12}-\rho _{13}\rho _{23}\right) ^{s}}{s!(1-\rho
_{13}^{2})^{s/2}(1-\rho _{23}^{2})^{s/2}}H_{s}\left( \frac{x_{1}-\rho
_{13}x_{3}}{\sqrt{1-\rho _{13}^{2}}}\right) H_{s}\left( \frac{x_{2}-\rho
_{23}x_{3}}{\sqrt{1-\rho _{23}^{2}}}\right) $ can be summed by the classical
Poisson--Mehler formula yielding 
\begin{equation}
\frac{1}{\sqrt{1-r^{2}}}\exp (-\frac{r^{2}(\xi ^{2}+\zeta ^{2})-2r\xi \zeta 
}{2(1-r^{2})}),  \label{pom1}
\end{equation}%
with $r\allowbreak =\allowbreak \frac{\left( \rho _{12}-\rho _{13}\rho
_{23}\right) }{\sqrt{(1-\rho _{13}^{2})(1-\rho _{23}^{2})}},$ $\xi
\allowbreak =\allowbreak \frac{x_{1}-\rho _{13}x_{3}}{\sqrt{1-\rho _{13}^{2}}%
}$ and $\zeta \allowbreak =\allowbreak \frac{x_{2}-\rho _{23}x_{3}}{\sqrt{%
1-\rho _{23}^{2}}}.$ It is a matter of elementary calculus to check that the
exponent of (\ref{pom1}) equals to the exponent of the right hand side of (%
\ref{gaus2}).
\end{proof}

\begin{remark}
Notice that we get a new proof of the classical KS formula in $3$
dimensions, since the left hand side of (\ref{exp_in_ASC}) is given by (\ref%
{3D1}-\ref{3D2}) while the right hand side can be, in view of the Remark \ref%
{gaus}, written as 
\begin{gather*}
\frac{1}{\sqrt{2\pi (1-\rho _{13}^{2})(1-\rho _{23}^{2})}}\allowbreak \times 
\newline
\allowbreak \exp (-\frac{1}{2}\left[ 
\begin{array}{ccc}
x_{1} & x_{2} & x_{3}%
\end{array}%
\right] \left[ 
\begin{array}{ccc}
1 & \rho _{13}\rho _{23} & \rho _{13} \\ 
\rho _{13}\rho _{23} & 1 & \rho _{23} \\ 
\rho _{13} & \rho _{23} & 1%
\end{array}%
\right] ^{-1}\left[ 
\begin{array}{c}
x_{1} \\ 
x_{2} \\ 
x_{3}%
\end{array}%
\right] )\times \\
\frac{1}{\sqrt{\frac{(1-\rho _{13}^{2})(1-\rho _{23}^{2})-\left( \rho
_{12}-\rho _{13}\rho _{23}\right) ^{2}}{(1-\rho _{13}^{2})(1-\rho _{23}^{2})}%
}} \\
\times \exp \left( -\frac{1}{2}\left[ 
\begin{array}{ccc}
x_{1} & x_{2} & x_{3}%
\end{array}%
\right] \left[ \left[ 
\begin{array}{ccc}
1 & \rho _{12} & \rho _{13} \\ 
\rho _{12} & 1 & \rho _{23} \\ 
\rho _{13} & \rho _{23} & 1%
\end{array}%
\right] ^{-1}-\left[ 
\begin{array}{ccc}
1 & \rho _{13}\rho _{23} & \rho _{13} \\ 
\rho _{13}\rho _{23} & 1 & \rho _{23} \\ 
\rho _{13} & \rho _{23} & 1%
\end{array}%
\right] ^{-1}\right] \left[ 
\begin{array}{c}
x_{1} \\ 
x_{2} \\ 
x_{3}%
\end{array}%
\right] \right) \allowbreak \times \\
\frac{1}{\sqrt{2\pi (1-\rho _{13}^{2})(1-\rho _{23}^{2})-\left( \rho
_{12}-\rho _{13}\rho _{23}\right) ^{2}}}\times \exp \left( -\frac{1}{2}\left[
\begin{array}{ccc}
x_{1} & x_{2} & x_{3}%
\end{array}%
\right] \left[ 
\begin{array}{ccc}
1 & \rho _{12} & \rho _{13} \\ 
\rho _{12} & 1 & \rho _{23} \\ 
\rho _{13} & \rho _{23} & 1%
\end{array}%
\right] ^{-1}\left[ 
\begin{array}{c}
x_{1} \\ 
x_{2} \\ 
x_{3}%
\end{array}%
\right] \right) ,
\end{gather*}%
which is the density of $N\left( \mathbf{0,\rho }\right) $ distribution.
Thus (\ref{exp_in_ASC}) is really a generalization of $3$ dimensional KS
formula.
\end{remark}

We have also the following observation concerning the AW density.

\begin{remark}
\label{AWdensity}Following observation (\ref{AW}) and Lemma \ref{pomoc} vii)
we deduce that 
\begin{equation*}
\frac{f_{CN}\left( x_{1}|x_{3},\rho _{13},q\right) f_{CN}\left(
x_{3}|x_{2},\rho _{23},q\right) }{f_{CN}\left( x_{1}|x_{2},\rho _{13}\rho
_{23},q\right) }\allowbreak \allowbreak =\allowbreak f_{AW}(x_{3}|x_{1},\rho
_{13},x_{2},\rho _{23},q).
\end{equation*}%
Hence we have the following expansion of the AW density:%
\begin{equation}
f_{AW}(x_{3}|x_{1},\rho _{13},x_{2},\rho _{23})=f_{CN}\left(
x_{3}|x_{2},\rho _{23},q\right) \sum_{s\geq 0}\frac{\rho _{13}^{s}}{\left[ s%
\right] _{q}!\left( \rho _{13}^{2}\rho _{23}^{2}\right) _{s}}P_{s}\left(
x_{1}|x_{2},\rho _{13}\rho _{23},q\right) P_{s}\left( x_{3}|x_{2},\rho
_{23},q\right) ,  \label{AW_exp}
\end{equation}%
which is an analogue of the Poisson--Mehler expansion formula interpreted as
the one dimensional expansion given in Lemma \ref{pomoc} vii). This result
has been obtained by other methods in \cite{Szab6}. Besides from (\ref%
{AW_exp}) it follows that: 
\begin{equation*}
\forall n\geq 1:\int_{S\left( q\right) }(P_{n}\left( x_{3}|x_{2},\rho
_{23},q\right) -\frac{\rho _{13}^{n}\left( \rho _{23}\right) _{n}}{\left(
\rho _{13}^{2}\rho _{23}^{2}\right) _{n}}P_{n}\left( x_{1}|x_{2},\rho
_{13}\rho _{23},q\right) )f_{AW}\left( x_{3}|x_{1},\rho _{13},x_{2},\rho
_{23}\right) dx_{3}=0.
\end{equation*}%
Thus there must exist functions $F_{i,n}\left( x_{1},x_{2}|\rho _{13},\rho
_{23},q\right) $ such that:%
\begin{gather*}
\forall j\geq 1:A_{j}\left( x_{3}|x_{1},\rho _{13},x_{2},\rho _{23},q\right)
= \\
\sum_{n=0}^{j}F_{j,n}\left( x_{1},x_{2}|\rho _{13},\rho _{23},q\right)
(P_{n}\left( x_{3}|x_{2},\rho _{23},q\right) -\frac{\rho _{13}^{n}\left(
\rho _{23}\right) _{n}}{\left( \rho _{13}^{2}\rho _{23}^{2}\right) _{n}}%
P_{n}\left( x_{1}|x_{2},\rho _{13}\rho _{23},q\right) ),
\end{gather*}%
where $\left\{ A_{j}\right\} _{j\geq 0}$ denote the Askey-Wilson polynomials
with complex parameters related to $x_{1},$ $\rho _{13},$ $x_{2},$ $\rho
_{23}$ by the formulae (2.17)-(2.20) of \cite{Szab3}.
\end{remark}

As far as general properties of the function $f_{3D}$ are concerned we have
the following formula that expresses function $C_{n}$ in terms of $q-$%
Hermite polynomials.

\begin{proposition}
\label{na hermity}$\forall n\geq 1:$%
\begin{gather*}
C_{n}\left( x_{1},x_{3}|\rho _{12},\rho _{23},\rho _{13},q\right) =\frac{1}{%
\left( \rho _{13}^{2}\right) _{n}}\sum_{k=0}^{\left\lfloor n/2\right\rfloor
}(-1)^{k}q^{\binom{k}{2}}\QATOPD[ ] {n}{2k}_{q}\QATOPD[ ] {2k}{k}_{q}\left[ k%
\right] _{q}!\rho _{12}^{k}\rho _{13}^{k}\rho _{23}^{k}\left( \frac{\rho
_{12}\rho _{13}}{\rho _{23}},\frac{\rho _{13}\rho _{23}}{\rho _{12}}\right)
_{k}\allowbreak  \\
\sum_{i=0}^{n-2k}\QATOPD[ ] {n-2k}{i}_{q}\rho _{23}^{i}\left( \frac{\rho
_{12}\rho _{13}}{\rho _{23}}q^{k}\right) _{i}\rho _{12}^{n-i-2k}\left( \frac{%
\rho _{13}\rho _{23}}{\rho _{12}}q^{k}\right) _{n-2k-i}H_{i}\left(
x_{1}|q\right) H_{n-2k-i}\left( x_{3}|q\right) 
\end{gather*}
\end{proposition}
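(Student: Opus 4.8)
The plan is to start from the representation of $C_{n}$ already supplied by Lemma \ref{UogCarl} iii) and then to trade the Al--Salam--Chihara factor for $q$-Hermite polynomials. With the identification $x=x_{1}$, $y=x_{3}$, $\rho _{1}=\rho _{12}$, $\rho _{2}=\rho _{23}$, $\rho _{3}=\rho _{13}$, that lemma reads
\[
C_{n}(x_{1},x_{3}|\rho _{12},\rho _{23},\rho _{13},q)=\sum_{s=0}^{n}\QATOPD[ ]{n}{s}_{q}\rho _{12}^{n-s}\rho _{23}^{s}\frac{(\rho _{12}\rho _{13}/\rho _{23})_{s}}{(\rho _{13}^{2})_{s}}H_{n-s}(x_{3}|q)P_{s}(x_{1}|x_{3},\rho _{13},q).
\]
First I would insert the expansion of the Al--Salam--Chihara polynomial furnished by Lemma \ref{pomoc} ii), namely $P_{s}(x_{1}|x_{3},\rho _{13},q)=\sum_{i=0}^{s}\QATOPD[ ]{s}{i}_{q}\rho _{13}^{s-i}B_{s-i}(x_{3}|q)H_{i}(x_{1}|q)$. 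This already isolates the $q$-Hermite polynomial $H_{i}(x_{1}|q)$ in the variable $x_{1}$, so the whole problem is reduced to linearizing the product $H_{n-s}(x_{3}|q)B_{s-i}(x_{3}|q)$ in the single variable $x_{3}$.

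The engine of the argument is therefore a linearization formula expressing $H_{b}(x|q)B_{a}(x|q)$ as a sum $\sum_{t\geq 0}(\cdots )H_{a+b-2t}(x|q)$ of $q$-Hermite polynomials. Such a formula is obtained by writing $B_{a}$ in the $q$-Hermite basis (the connection coefficients between the polynomials $B_{n}$, i.e.\ the rescaled $q^{-1}$-Hermite polynomials, and the $H_{n}$ are recorded in \cite{bms} and \cite{Szab3}) and then applying the product formula (\ref{identity}). The degree-lowering index $t$ of this linearization is exactly the summation index $k$ of the statement: since $H_{n-s}(x_{3}|q)B_{s-i}(x_{3}|q)$ contributes only terms $H_{n-i-2t}(x_{3}|q)$, matching the target degree $n-2k-i$ forces $t=k$, which also explains why the outer sum runs up to $\left\lfloor n/2\right\rfloor $.

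After this substitution the right-hand side is a sum over $s$, $i$, the drop $t$, and the internal product-formula index. I would interchange the order of summation, fix the two outgoing degrees $i$ (the $x_{1}$-degree) and $n-2k-i$ (the $x_{3}$-degree), and evaluate the two internal sums. The mechanism for collapsing them is the finite $q$-binomial identity $(a)_{m}=\sum_{j=0}^{m}\QATOPD[ ]{m}{j}_{q}(-1)^{j}q^{\binom{j}{2}}a^{j}$ already used in the proof of Lemma \ref{UogCarl} iii); this is what manufactures the four $q$-Pochhammer symbols $(\rho _{12}\rho _{13}/\rho _{23})_{k}$, $(\rho _{13}\rho _{23}/\rho _{12})_{k}$, $(\rho _{12}\rho _{13}q^{k}/\rho _{23})_{i}$ and $(\rho _{13}\rho _{23}q^{k}/\rho _{12})_{n-2k-i}$, the last two carrying the shift $q^{k}$ inherited from the $k$ levels already absorbed into the diagonal. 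The diagonal factor $(-1)^{k}q^{\binom{k}{2}}\QATOPD[ ]{n}{2k}_{q}\QATOPD[ ]{2k}{k}_{q}[k]_{q}!$ then emerges as the aggregate of the linearization coefficients collected over the admissible splittings of $n-i=(n-s)+(s-i)$.

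The hard part is precisely this final reorganization: tracking the several $q$-binomial coefficients and powers of $q$ so that the two internal sums telescope exactly into the stated Pochhammer symbols and so that the clean combination $\QATOPD[ ]{2k}{k}_{q}[k]_{q}!$ together with $q^{\binom{k}{2}}$ appears. As independent checks on the bookkeeping I would verify the two specializations that are already known: setting $\rho _{13}=0$ (where $1/(\rho _{13}^{2})_{n}=1$ and only $k=0$ survives) must reproduce the first line of Corollary \ref{wn1} v), while setting $\rho _{12}=\rho _{13}\rho _{23}$ makes $(\rho _{13}\rho _{23}/\rho _{12})_{k}$ vanish for $k\geq 1$ and forces $i=n$, collapsing the double sum to $\rho _{23}^{n}H_{n}(x_{1}|q)$ in agreement with Corollary \ref{wn1} iii).
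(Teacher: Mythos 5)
Your outline follows the paper's own proof step for step: both start from Lemma \ref{UogCarl} iii) with the identification $\rho _{1}=\rho _{12}$, $\rho _{2}=\rho _{23}$, $\rho _{3}=\rho _{13}$; both substitute $P_{s}\left( x_{1}|x_{3},\rho _{13},q\right) =\sum_{i=0}^{s}\QATOPD[ ] {s}{i}_{q}\rho _{13}^{s-i}B_{s-i}\left( x_{3}|q\right) H_{i}\left( x_{1}|q\right) $ from Lemma \ref{pomoc} ii); both then linearize the products $H_{n-s}\left( x_{3}|q\right) B_{s-i}\left( x_{3}|q\right) $ back into single $q$-Hermite polynomials (the paper does this by quoting the closed formula $H_{m}\left( x|q\right) B_{n}\left( x|q\right) =(-1)^{n}q^{\binom{n}{2}}\sum_{i=0}^{\left\lfloor (n+m)/2\right\rfloor }\QATOPD[ ] {n}{i}_{q}\QATOPD[ ] {n+m-i}{i}_{q}\left[ i\right] _{q}!q^{-i(n-i)}H_{n+m-2i}\left( x|q\right) $, Lemma 2 i) of \cite{Szab3}, rather than re-deriving it from connection coefficients and (\ref{identity}) as you propose); and both finish by reorganizing the multiple sum. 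Your degree-matching observation forcing $t=k$ and your two specialization checks are correct.

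The gap sits exactly in the step you yourself call ``the hard part'', and it is not merely unexecuted: the one concrete tool you name for it is too weak. After the linearization and the interchange of summations, the inner sum that must be evaluated for fixed $i$ and $k$ can be written as
\begin{equation*}
\rho _{12}^{n-i-k}\sum_{s=0}^{N}(-1)^{s}q^{\binom{s}{2}}\QATOPD[ ] {N}{s}_{q}\left( a\right) _{s}b^{s}\left( abq^{s}\right) _{N-s},\qquad N=n-i-k,\quad a=\frac{\rho _{12}\rho _{13}}{\rho _{23}}q^{i+k},\quad b=\frac{\rho _{13}\rho _{23}}{\rho _{12}},
\end{equation*}
i.e.\ its terms carry $q$-Pochhammer factors depending on the summation index $s$. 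The identity you invoke, $\left( a\right) _{m}=\sum_{j=0}^{m}\QATOPD[ ] {m}{j}_{q}(-1)^{j}q^{\binom{j}{2}}a^{j}$, is precisely the $a=0$ special case of what is needed and cannot by itself collapse such a sum; the paper must use the stronger summation lemma $\sum_{s=0}^{N}(-1)^{s}q^{\binom{s}{2}}\QATOPD[ ] {N}{s}_{q}\left( a\right) _{s}b^{s}\left( abq^{s}\right) _{N-s}=\left( b\right) _{N}$ (Lemma 1 ii) of \cite{Szab3}), applied with exactly these $a$ and $b$ (note $ab=\rho _{13}^{2}q^{i+k}$). Relatedly, your accounting of the four Pochhammer symbols is off: only $\left( \rho _{13}\rho _{23}/\rho _{12}\right) _{n-i-k}$ is manufactured by the collapse; the factors $\left( \rho _{12}\rho _{13}/\rho _{23}\right) _{k}$ and $\left( \rho _{12}\rho _{13}q^{k}/\rho _{23}\right) _{i}$ come instead from splitting the symbol $\left( \rho _{12}\rho _{13}/\rho _{23}\right) _{i+k+s}$ inherited from Lemma \ref{UogCarl} iii) via the rule $\left( a\right) _{n}=\left( a\right) _{j}\left( aq^{j}\right) _{n-j}$, and the collapse output $\left( b\right) _{n-i-k}$ is then split the same way to yield the remaining two. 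Since the entire content of the proposition is this bookkeeping, and your sketch both leaves it undone and misidentifies the identity that makes it work, the proposal as it stands does not amount to a proof.
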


\begin{proof}
Long proof is shifted to Section \ref{dowody}
\end{proof}

Let us remark that when $\rho _{13}\allowbreak =\allowbreak \rho _{12}\rho
_{23}$ then 
\begin{gather*}
C_{n}\left( x_{1},x_{3}|\rho _{12},\rho _{23},\rho _{12}\rho _{23},q\right) =%
\frac{1}{\left( \rho _{12}^{2}\rho _{23}^{2}\right) _{n}}\sum_{k=0}^{\left%
\lfloor n/2\right\rfloor }(-1)^{k}q^{\binom{k}{2}}\QATOPD[ ] {n}{2k}_{q}%
\QATOPD[ ] {2k}{k}_{q}\left[ k\right] _{q}!\rho _{12}^{2k}\rho
_{23}^{2k}\left( \rho _{12}^{2},\rho _{23}^{2}\right) _{k}\allowbreak  \\
\sum_{i=0}^{n-2k}\QATOPD[ ] {n-2k}{i}_{q}\rho _{23}^{i}\left( \rho
_{12}^{2}q^{k}\right) _{i}\rho _{12}^{n-i-2k}\left( \rho
_{23}^{2}q^{k}\right) _{n-2k-i}H_{i}\left( x_{1}|q\right) H_{n-2k-i}\left(
x_{3}|q\right) 
\end{gather*}%
the formula obtained in \cite{Szab3} (3.2-3.3) in the context of
Askey--Wilson polynomials.

As stated in the Introduction the problem of non-negativity of the function $%
f_{3D}$ for all allowed values of $q$ and $\rho ^{\prime }s$ has negative
solution. Above we indicated that if $\rho _{12}\allowbreak =\allowbreak
\allowbreak \rho _{13}\rho _{23}$ (or similarly for some other pair of
indices) then $f_{3D}$ is positive for all $-1\allowbreak <\allowbreak
q\allowbreak \leq \allowbreak 1$ and $x_{1},x_{2},x_{3}\allowbreak \in
\allowbreak S\left( q\right) .$ Now we will indicate some another
relationship between $\rho ^{\prime }s$ and $q$ so that for some values of $%
x_{1},x_{2},x_{3}\allowbreak \in \allowbreak S\left( q\right) $ the function 
$f_{3D}$ is negative.

We have the following Theorem

\begin{theorem}
\label{Main}Let $\left\vert q\right\vert <1$ and $\rho _{12}\allowbreak
=\allowbreak \allowbreak q\rho _{13}\rho _{23}$ , then there exists a set $%
S\allowbreak \subset \allowbreak S\left( q\right) \times S\left( q\right)
\times S\left( q\right) $ of positive Lebesgue measure such that for all $%
\left( x_{1},x_{2},x_{3}\right) \allowbreak \in \allowbreak S,$ $f_{3D}$ is
negative.
\end{theorem}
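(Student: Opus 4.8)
The plan is to exploit the specific choice $\rho_{12}=q\rho_{13}\rho_{23}$ in Proposition~\ref{proposition}~ii), which expands $f_{3D}$ as a product of two nonnegative $f_{CN}$ factors times an infinite series in products $P_s(x_1|x_3,\rho_{13},q)P_s(x_2|x_3,\rho_{23},q)$. Since the prefactor $f_{CN}(x_1|x_3,\rho_{13},q)f_{CN}(x_3|x_2,\rho_{23},q)f_N(x_2|q)$ is strictly positive on $S(q)^3$ by Lemma~\ref{pomoc}~ix), the sign of $f_{3D}$ coincides with the sign of the series
\begin{equation*}
G(x_1,x_2,x_3)=\sum_{s\geq 0}\frac{\rho_{12}^{s}\,(\rho_{13}\rho_{23}/\rho_{12})_{s}}{[s]_q!\,(\rho_{13}^{2},\rho_{23}^{2})_{s}}\,P_s(x_1|x_3,\rho_{13},q)\,P_s(x_2|x_3,\rho_{23},q).
\end{equation*}
The whole problem thus reduces to producing a point where $G<0$ and then invoking continuity to get a set of positive measure.

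First I would simplify the coefficients under the hypothesis $\rho_{12}=q\rho_{13}\rho_{23}$. One computes $\rho_{12}^{s}(\rho_{13}\rho_{23}/\rho_{12})_{s}=\prod_{i=1}^{s}(\rho_{12}-q^{i-1}\rho_{13}\rho_{23})=\prod_{i=1}^{s}\rho_{13}\rho_{23}(q-q^{i-1})$, which vanishes for $s=1$ (the factor $q-q^{0}=q-1$ is present, but the $i=1$ term gives $\rho_{13}\rho_{23}(q-1)\neq 0$; rather it is the telescoping that matters) — more precisely the product has a factor $(q-q^{0})=q-1\neq0$ for every $s\geq1$, so no term is killed, but the sign pattern of $\prod_{i=1}^s(q-q^{i-1})$ alternates in a controlled way. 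The key qualitative point is that with $\rho_{12}=q\rho_{13}\rho_{23}$ the series is genuinely different from the ``nice'' case $\rho_{12}=\rho_{13}\rho_{23}$ (where it collapses to a single positive term), and the sign of the higher coefficients is no longer forced to be positive.

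The cleanest route is to evaluate at a distinguished point where the ASC polynomials take simple values. I would try the corner $x_1=x_2=x_3=2/\sqrt{1-q}$ (the right endpoint of $S(q)$), or alternatively use the $q$-Hermite expansion of $C_n$ from Proposition~\ref{na hermity} to read off the leading low-order behaviour of $G$. A particularly efficient variant is to integrate $G$ against a suitable test function built from the orthogonality in Lemma~\ref{pomoc}~iv): projecting $f_{3D}$ onto $P_n(x_1|x_3,\rho_{13},q)$ isolates individual coefficients, and showing that one such coefficient (for instance the $s=2$ contribution, which carries the extra factor $(\rho_{12}-q\rho_{13}\rho_{23})=0$ telescoped against $(\rho_{12}-\rho_{13}\rho_{23})\neq0$) forces a negative value somewhere. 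Concretely, I expect that truncating $G$ and estimating the tail via the bounds in Lemma~\ref{pomoc}~viii)--ix) and the growth bound \eqref{ogr_H} will show that the finite-order negative contribution dominates for $\rho_{13},\rho_{23}$ small but nonzero and $q$ bounded away from $0$ and $1$.

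Once a single point $(x_1^{*},x_2^{*},x_3^{*})\in S(q)^3$ with $G(x_1^{*},x_2^{*},x_3^{*})<0$ is exhibited, the conclusion is immediate: by Lemma~\ref{pomoc}~viii),~ix) and \eqref{ogr_H} the series defining $f_{3D}$ converges absolutely and uniformly on $S(q)^3$, hence $f_{3D}$ is continuous there, so the set $S=\{(x_1,x_2,x_3):f_{3D}<0\}$ is open and nonempty, therefore of positive Lebesgue measure. The main obstacle I anticipate is the middle step: certifying that some explicit partial sum of $G$ is negative while the infinite tail cannot rescue positivity. This is essentially a quantitative estimate — one must pin down concrete numerical values of $q,\rho_{13},\rho_{23}$ and a concrete point in $S(q)^3$, then bound the tail of the alternating-coefficient series, and this is where the computation becomes delicate rather than formal.
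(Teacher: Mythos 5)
Your opening and closing moves coincide with the paper's: you invoke Proposition \ref{proposition} ii), observe that the prefactor $f_{CN}\left( x_{1}|x_{3},\rho _{13},q\right) f_{CN}\left( x_{3}|x_{2},\rho _{23},q\right) f_{N}\left( x_{2}|q\right)$ is positive so that the sign of $f_{3D}$ is the sign of the series $G$, and at the end you pass from a single negative value to a set of positive measure by continuity. The gap is in the middle step, which is exactly where the theorem lives. Under the hypothesis $\rho _{12}=q\rho _{13}\rho _{23}$ one has
\begin{equation*}
\rho _{12}^{s}\left( \rho _{13}\rho _{23}/\rho _{12}\right)
_{s}=\prod_{i=0}^{s-1}\left( \rho _{12}-q^{i}\rho _{13}\rho _{23}\right)
=\left( \rho _{13}\rho _{23}\right) ^{s}\prod_{i=0}^{s-1}\left(
q-q^{i}\right) ,
\end{equation*}
and for every $s\geq 2$ this product contains the factor $q-q^{1}=0$. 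Hence \emph{all} coefficients with $s\geq 2$ vanish and the series terminates after two terms:
\begin{equation*}
G=1-\frac{(1-q)\rho _{13}\rho _{23}(x_{1}-\rho _{13}x_{3})(x_{2}-\rho
_{23}x_{3})}{(1-\rho _{13}^{2})(1-\rho _{23}^{2})},
\end{equation*}
an explicit quadratic expression whose negativity at suitable points (the paper takes $\rho _{13}^{2}=\rho _{23}^{2}=0.6$, $q>1/3$, and explicit $x_{i}\in S(q)$ compatible with $\Delta \geq 0$) is an elementary computation. You assert the opposite --- ``the product has a factor $(q-q^{0})=q-1\neq 0$ for every $s\geq 1$, so no term is killed'' --- by inspecting only the $i=1$ factor and overlooking the $i=2$ factor $q-q=0$. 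Oddly, you do notice later that the $s=2$ coefficient carries the factor $\rho _{12}-q\rho _{13}\rho _{23}=0$, but you read this as ``forcing a negative value somewhere'' rather than drawing the correct conclusion that this term, and every subsequent one, is annihilated.

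Because of this misreading, your plan replaces a finite, exact computation by an infinite-tail estimate (``truncating $G$ and estimating the tail\dots'') which you yourself flag as the delicate step you cannot complete; in fact the tail is identically zero, so there is nothing to estimate. More importantly, you never exhibit concrete values of $q,\rho _{13},\rho _{23}$ (consistent with $\Delta \geq 0$) and a concrete point of $S(q)^{3}$ at which $G<0$; you only state the expectation that such a point exists. Since producing that point is the entire content of the theorem, the proposal as written does not prove it, even though both bookends of the argument are correct and agree with the paper's.
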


\begin{proof}
Let us take $\rho _{12}\allowbreak =\allowbreak \allowbreak q\rho _{13}\rho
_{23}$ and consider (\ref{exp_in_ASC}), then $\rho _{12}^{s}\left( \rho
_{13}\rho _{23}/\rho _{12}\right) _{s}\allowbreak =\allowbreak \left\{ 
\begin{array}{ccc}
1 & if & s=0 \\ 
-(1-q)\rho _{13}\rho _{23} & if & s=1 \\ 
0 & if & s>1%
\end{array}%
\right. .$ \newline
Hence 
\begin{gather*}
f_{3D}(x_{1},x_{2},x_{3}|q\rho _{13}\rho _{23},\rho _{13},\rho _{23},q)= \\
f_{CN}\left( x_{1}|x_{3},\rho _{13},q\right) f_{CN}\left( x_{3}|x_{2},\rho
_{23},q\right) f_{N}\left( x_{2}|q\right) (1-\frac{(1-q)\rho _{13}\rho
_{23}(x_{1}-\rho _{13}x_{3})(x_{2}-\rho _{23}x_{3})}{(1-\rho
_{13}^{2})(1-\rho _{23}^{2})}).
\end{gather*}%
Thus the sign of $f_{3D}$ is the same as the sign of 
\begin{equation*}
1-\frac{(1-q)\rho _{13}\rho _{23}(x_{1}-\rho _{13}x_{3})(x_{2}-\rho
_{23}x_{3})}{(1-\rho _{13}^{2})(1-\rho _{23}^{2})}.
\end{equation*}%
Equivalently $f_{3D}$ would be positive if for all $x_{1},x_{2},x_{3}\in
S\left( q\right) ,$ $\left\vert \rho _{13}\right\vert ,\left\vert \rho
_{23}\right\vert <1$ and 
\begin{equation*}
(1-\rho _{13}^{2})(1-\rho _{23}^{2})\geq (1-q)^{2}\rho _{13}^{2}\rho
_{23}^{2}
\end{equation*}%
which comes from the positivity in (\ref{wyzn}) we would have%
\begin{equation*}
(1-\rho _{13}^{2})(1-\rho _{23}^{2})\geq (1-q)\rho _{13}\rho
_{23}(x_{1}-\rho _{13}x_{3})(x_{2}-\rho _{23}x_{3}).
\end{equation*}%
Since $y_{i}\overset{df}{=}\sqrt{1-q}x_{i}\allowbreak \in \allowbreak
\lbrack -2,2]$ the last inequality reduces to the following :%
\begin{equation}
(1-\rho _{13}^{2})(1-\rho _{23}^{2})\geq \rho _{13}\rho _{23}(y_{1}-\rho
_{13}y_{3})(y_{2}-\rho _{23}y_{3})  \label{nier}
\end{equation}

Let us select $y_{1}\allowbreak =\allowbreak 2\rho _{13}$ and $%
y_{2}\allowbreak =\allowbreak -2\rho _{23}.$ The inequality now takes a form 
\begin{equation*}
(1-\rho _{13}^{2})(1-\rho _{23}^{2})\geq \rho _{13}^{2}\rho
_{23}^{2}(4-y_{3}^{2}).
\end{equation*}%
Now it suffices take $\rho _{13}^{2},\rho _{23}^{2}\allowbreak =\allowbreak
0.6,$ $1>q>1/3$ and $y_{3}^{2}<4-\frac{4}{9}$. On one hand one gets 
\begin{equation*}
(1-\rho _{13}^{2})(1-\rho _{23}^{2})\allowbreak =\allowbreak .16>\left( 1-%
\frac{1}{3}\right) ^{2}.6^{2}
\end{equation*}%
while on the other we have 
\begin{equation*}
\allowbreak (1-\rho _{13}^{2})(1-\rho _{23}^{2})\allowbreak \allowbreak
=\allowbreak (1-.6)^{2}\allowbreak =\allowbreak <\allowbreak .6^{2}(4-(4-%
\frac{4}{9}))\allowbreak \leq \allowbreak \rho _{13}^{2}\rho
_{23}^{2}(4-y_{3}^{2}).
\end{equation*}%
Since the function $f_{3D}$ is continuous in $x_{1},x_{2},x_{3}$ hence there
exists a neighborhood of points $\left( 2\rho _{13}/(\sqrt{1-q},2\rho _{23}/%
\sqrt{1-q},2y_{3}/\sqrt{1-q}\right) $ of positive Lebesgue measure such that
this function is negative on this neighborhood.
\end{proof}

As a corollary we get the following fact.

\begin{corollary}
\begin{gather}
\frac{f_{CN}\left( x_{1}|x_{3},\rho _{13},q\right) }{f_{CN}\left(
x_{1}|x_{2},q\rho _{13}\rho _{23},q\right) }(1-\frac{(1-q)\rho _{13}\rho
_{23}(x_{1}-\rho _{13}x_{3})(x_{2}-\rho _{23}x_{3})}{(1-\rho
_{13}^{2})(1-\rho _{23}^{2})})  \label{nonpos} \\
=\sum_{s=0}^{\infty }\frac{\rho _{13}^{s}\left( 1-q^{s}\rho _{23}^{2}\right) 
}{\left[ s\right] _{q}!\left( q^{2}\rho _{13}^{2}\rho _{23}^{2}\right)
_{s}\left( 1-\rho _{23}^{2}\right) }P_{s}\left( x_{1}|x_{2},q\rho _{13}\rho
_{23},q\right) P_{s}\left( x_{3}|x_{2},\rho _{23},q\right) .  \notag
\end{gather}
\end{corollary}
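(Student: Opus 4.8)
The plan is to evaluate the single function $f_{3D}(x_{1},x_{2},x_{3}|q\rho_{13}\rho_{23},\rho_{13},\rho_{23},q)$ in two different ways and then equate the two outputs. The first evaluation is already carried out inside the proof of Theorem~\ref{Main}: starting from the $x_{3}$-centered expansion (\ref{exp_in_ASC}) and using that the coefficient $\rho_{12}^{s}(\rho_{13}\rho_{23}/\rho_{12})_{s}$ collapses to $1$, $-(1-q)\rho_{13}\rho_{23}$, $0,0,\dots$ for $s=0,1,2,\dots$ at $\rho_{12}=q\rho_{13}\rho_{23}$, one obtains
\begin{equation*}
f_{3D}=f_{CN}(x_{1}|x_{3},\rho_{13},q)\,f_{CN}(x_{3}|x_{2},\rho_{23},q)\,f_{N}(x_{2}|q)\left(1-\frac{(1-q)\rho_{13}\rho_{23}(x_{1}-\rho_{13}x_{3})(x_{2}-\rho_{23}x_{3})}{(1-\rho_{13}^{2})(1-\rho_{23}^{2})}\right).
\end{equation*}
I would simply reuse this display as the first of my two expressions.

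For the second evaluation I would invoke the instance of Proposition~\ref{proposition} ii) in which $x_{2}$ is the common conditioning variable of both Al--Salam--Chihara factors (the permuted ``$(1,3)$'' version), namely
\begin{equation*}
f_{3D}=f_{CN}(x_{1}|x_{2},\rho_{12},q)\,f_{CN}(x_{2}|x_{3},\rho_{23},q)\,f_{N}(x_{3}|q)\sum_{s\geq0}\frac{\rho_{13}^{s}(\rho_{12}\rho_{23}/\rho_{13})_{s}}{[s]_{q}!(\rho_{12}^{2},\rho_{23}^{2})_{s}}P_{s}(x_{1}|x_{2},\rho_{12},q)P_{s}(x_{3}|x_{2},\rho_{23},q).
\end{equation*}
Setting $\rho_{12}=q\rho_{13}\rho_{23}$, the first $P$-argument becomes $q\rho_{13}\rho_{23}$, the numerator symbol becomes $(\rho_{12}\rho_{23}/\rho_{13})_{s}=(q\rho_{23}^{2})_{s}$, and the first denominator symbol becomes $(\rho_{12}^{2})_{s}=(q^{2}\rho_{13}^{2}\rho_{23}^{2})_{s}$. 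The only nonroutine manipulation is the telescoping identity
\begin{equation*}
\frac{(q\rho_{23}^{2})_{s}}{(\rho_{23}^{2})_{s}}=\frac{1-q^{s}\rho_{23}^{2}}{1-\rho_{23}^{2}},
\end{equation*}
which is immediate from $(a;q)_{s}=\prod_{i=0}^{s-1}(1-aq^{i})$ since the two products differ only in their first and last factors. After this substitution the sum in the second expression is exactly the series on the right-hand side of (\ref{nonpos}).

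Finally I would equate the two expressions for $f_{3D}$ and cancel the shared factor by the reversibility relation $f_{CN}(x_{3}|x_{2},\rho_{23},q)f_{N}(x_{2}|q)=f_{CN}(x_{2}|x_{3},\rho_{23},q)f_{N}(x_{3}|q)$ (the symmetry of the two-dimensional density $f_{CN}(\cdot|\cdot,\rho_{23},q)f_{N}$ already used earlier in the paper). This removes all $\rho_{23}$-factors from both sides, leaving $f_{CN}(x_{1}|x_{3},\rho_{13},q)$ times the bracket on one side and $f_{CN}(x_{1}|x_{2},q\rho_{13}\rho_{23},q)$ times the series on the other; dividing by $f_{CN}(x_{1}|x_{2},q\rho_{13}\rho_{23},q)$ yields precisely (\ref{nonpos}). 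The argument is essentially bookkeeping, and I do not expect a genuine obstacle; the only points needing care are selecting the correctly permuted form of Proposition~\ref{proposition} ii) and verifying the $q$-Pochhammer collapse above.
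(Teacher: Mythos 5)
Your proposal is correct and matches the paper's own (much terser) argument: the paper likewise combines the bracket expression for $f_{3D}$ at $\rho_{12}=q\rho_{13}\rho_{23}$ obtained in the proof of Theorem \ref{Main} with the $x_{2}$-pivoted form of (\ref{exp_in_ASC}), the collapse $(\rho_{12}\rho_{23}/\rho_{13})_{s}=(q\rho_{23}^{2})_{s}$, $(\rho_{12}^{2})_{s}=(q^{2}\rho_{13}^{2}\rho_{23}^{2})_{s}$, and the ratio identity $(q\rho_{23}^{2})_{s}/(\rho_{23}^{2})_{s}=(1-q^{s}\rho_{23}^{2})/(1-\rho_{23}^{2})$. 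You merely make explicit the steps the paper leaves implicit, namely the choice of the permuted expansion and the cancellation via $f_{CN}(x_{3}|x_{2},\rho_{23},q)f_{N}(x_{2}|q)=f_{CN}(x_{2}|x_{3},\rho_{23},q)f_{N}(x_{3}|q)$.
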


\begin{proof}
Follows the (\ref{exp_in_ASC}) and the fact that for $\rho _{12}\allowbreak
\allowbreak =\allowbreak q\rho _{13}\rho _{23}\allowbreak $ $\rho
_{13}^{s}\left( \rho _{12}\rho _{23}/\rho _{13}\right) _{s}$ $\allowbreak
=\allowbreak \rho _{13}^{s}\left( q\rho _{23}^{2}\right) _{s}$ and further
that $\frac{\left( q\rho _{23}^{2}\right) _{s}}{\left( \rho _{23}^{2}\right)
_{s}}\allowbreak =\allowbreak \frac{1-q^{s}\rho _{23}^{2}}{1-\rho _{23}^{2}}%
. $
\end{proof}

\begin{remark}
Notice that we can take $q\rho _{13}\rho _{23}$ instead $\rho _{13}\rho
_{23} $ in (\ref{rozwiniecie}) getting $\frac{f_{CN}\left( x_{1}|x_{3},\rho
_{13},q\right) }{f_{CN}\left( x_{1}|x_{2},q\rho _{13}\rho _{23},q\right) }%
\allowbreak =\allowbreak \sum_{s=0}^{\infty }\frac{\rho _{13}^{s}}{\left[ s%
\right] _{q}!\left( q^{2}\rho _{13}^{2}\rho _{23}^{2}\right) _{s}}%
P_{s}\left( x_{1}|x_{2},q\rho _{13}\rho _{23},q\right) P_{s}\left(
x_{3}|x_{2},\rho _{23},q\right) .$ Hence the positivity conditions for
kernels of the form $\sum_{s=0}^{\infty }a_{s}\left( \rho _{1},\rho
_{2}\right) P_{s}\left( x_{1}|x_{2},\rho _{1},q\right) P_{s}\left(
x_{3}|x_{2},\rho _{2},q\right) $ are quite subtle. Besides comparison of (%
\ref{rozwiniecie}) and (\ref{nonpos}) can be the source of many interesting
kernels involving Al-Salam--Chihara polynomials. In particular for $\rho
_{12}\allowbreak =\allowbreak q^{k}\rho _{13}\rho _{23}$ we have 
\begin{eqnarray*}
&&\frac{f_{CN}\left( x_{1}|x_{3},\rho _{13},q\right) }{f_{CN}\left(
x_{1}|x_{2},q^{k}\rho _{13}\rho _{23},q\right) }\sum_{j=0}^{k}\frac{%
q^{kj}\rho _{13}^{j}\rho _{23}^{j}\left( q^{-k}\right) _{j}}{\left[ j\right]
_{q}!\left( \rho _{13}^{2}\right) _{j}\left( \rho _{23}^{2}\right) _{j}}%
P_{j}\left( x_{1}|x_{3},\rho _{13},q\right) P_{j}\left( x_{2}|x_{3},\rho
_{23},q\right) \\
&=&\sum_{s=0}^{\infty }\frac{\rho _{13}^{s}\left( q^{k}\rho _{23}^{2}\right)
_{s}}{\left[ s\right] _{q}!\left( q^{2k}\rho _{13}^{2}\rho _{23}^{2}\right)
_{s}\left( \rho _{23}^{2}\right) _{s}}P_{s}\left( x_{1}|x_{2},q^{k}\rho
_{13}\rho _{23},q\right) P_{s}\left( x_{3}|x_{2},\rho _{23},q\right) .
\end{eqnarray*}
\end{remark}

\section{Open Problems\label{open}}

\begin{enumerate}
\item Consider the KS formulae for higher (than $3$) dimensions. Can it be
nonnegative for all allowed values of the parameters $\rho $ and all values
of the variables form $S\left( q\right) ?$

\item Since for $q=0$ both densities $f_{CN}$ and $f_{N}$ and ASC
polynomials (compare (\ref{q=0})) are very simple one can hope to obtain
exact formula for function $f_{3D}.$ One can deduce that then $f_{3D}$
divided by the product of the one dimensional marginals should have the form
of the ratio of quadratic forms in $3$ variables and the product of three
quadratic functions of every pair of variables. Thus one can hope to find a
set $\Theta $ in $\mathbb{(}-1,1)^{3}$ such that if $\left( \rho _{12},\rho
_{13},\rho _{23}\right) \in \Theta $ then $f_{3D}$ is a $3-$dimensional
density.

\item What about the cases $\rho _{12}=q^{k}\rho _{13}\rho _{23}$ for $k>1.$
Do we get non-positivity for all $k>1$ or are there some $k^{\prime }$s for
which it is nonnegative?

\item Numerical simulations suggest that for say $\rho _{12}\allowbreak
=\allowbreak 0$ one can find $q,$ $\rho _{13},\rho _{23},$ $%
x_{1},x_{2},x_{3}\allowbreak \in \allowbreak S\left( q\right) $ such that
one of the kernels (and consequently both) given in (\ref{rho=0}) is
negative. What would be the proof of this fact? More precisely what would be
range of parameters $q,$ $\rho _{13},\rho _{23}$ and subset $A\subset
S\left( q\right) ^{3}$ of positive Lebesgue measure such that for $%
x_{1},x_{2},x_{3}\in A$ the kernel is negative.

\item What about cases $\left\vert \rho _{12}\right\vert >\rho _{13}\rho
_{23}$ ?
\end{enumerate}

\section{Proofs\label{dowody}}

\begin{proof}[Proof of Proposition \protect\ref{proposition}]
First of all let us notice that due to inequality (\ref{ogr_H}) and
assertion viii) of Lemma \ref{pomoc} series defining function $f_{3D}$ is
absolutely convergent. Now to prove i) we apply Lemma \ref{UogCarl} i) to (%
\ref{3D1}) and (\ref{3D2}) and remembering that $\gamma _{0,0}\left(
x,y|\rho ,q\right) \allowbreak =\allowbreak f_{CN}\left( x|y,\rho ,q\right)
/f_{N}\left( x|q\right) $ we get:%
\begin{gather*}
f_{3D}\left( x_{1},x_{2},x_{3}|\rho _{12},\rho _{13},\rho _{23},q\right) \\
=\prod_{i=1}^{3}f_{N}\left( x_{i}|q\right) \sum_{j,k\geq 0}\frac{\rho
_{12}^{j}\rho _{23}^{k}}{\left[ j\right] _{q}!\left[ k\right] _{q}!}%
H_{j+k}\left( x_{2}|q\right) \sum_{i\geq 0}\frac{\rho _{13}^{i}}{\left[ i%
\right] _{q}!}H_{i+j}\left( x_{1}|q\right) H_{i+k}\left( x_{3}|q\right) \\
=f_{CN}\left( x_{1}|x_{3},\rho _{13},q\right) f_{N}\left( x_{3}|q\right)
f_{N}\left( x_{2}|q\right) \sum_{j,k\geq 0}\frac{\rho _{12}^{j}\rho _{23}^{k}%
}{\left[ j\right] _{q}!\left[ k\right] _{q}!}H_{j+k}\left( x_{2}|q\right)
Q_{j,k}(x_{1},x_{3}|\rho _{13},q).
\end{gather*}%
Now changing order of summation and substituting $j+k$\allowbreak\ $->$%
\allowbreak\ $s,$ $j->k$ we get%
\begin{gather*}
f_{3D}\left( x_{1},x_{2},x_{3}|\rho _{12},\rho _{13},\rho _{23},q\right)
=f_{CN}\left( x_{3}|x_{1},\rho _{13},q\right) f_{N}\left( x_{1}|q\right)
f_{N}\left( x_{2}|q\right) \\
\times \sum_{s\geq 0}\frac{1}{\left[ s\right] _{q}!}H_{s}\left(
x_{2}|q\right) C_{s}\left( x_{1},x_{3}|\rho _{12},\rho _{23},\rho
_{13},q\right) .
\end{gather*}%
ii) We apply (\ref{identyty2}). Then%
\begin{gather*}
f_{3D}(x_{1},x_{2},x_{3}|\rho _{12},\rho _{13},\rho _{23},q)\allowbreak
=\allowbreak \prod_{i=1}^{3}f_{N}\left( x_{i}|q\right) \sum_{i,j,k\geq 0}%
\frac{\rho _{12}^{j}\rho _{23}^{k}\rho _{13}^{i}}{\left[ i\right] _{q}!\left[
j\right] _{q}!\left[ k\right] _{q}!}H_{i+j}\left( x_{1}|q\right)
H_{j+k}\left( x_{2}|q\right) \\
\times \sum_{n\geq 0}\QATOPD[ ] {i}{n}_{q}\QATOPD[ ] {k}{n}_{q}\left[ n%
\right] _{q}!\left( -1\right) ^{n}q^{\binom{n}{2}}H_{i-n}\left(
x_{3}|q\right) H_{k-n}\left( x_{3}|q\right) \allowbreak = \\
\prod_{i=1}^{3}f_{N}\left( x_{i}|q\right) \sum_{n=0}^{\infty }\left(
-1\right) ^{n}q^{\binom{n}{2}}\frac{\rho _{23}^{n}\rho _{13}^{n}}{\left[ n%
\right] _{q}!}\sum_{j=0}^{\infty }\frac{\rho _{12}^{j}}{\left[ j\right] _{q}!%
} \\
\times \sum_{i=n}^{\infty }\sum_{k=n}^{\infty }\frac{\rho _{23}^{k-n}\rho
_{13}^{i-n}}{\left[ i-n\right] _{q}!\left[ k-n\right] _{q}!}H_{i-n}\left(
x_{3}|q\right) H_{k-n}\left( x_{3}|q\right) H_{i+j}\left( x_{1}|q\right)
H_{j+k}\left( x_{2}|q\right) \allowbreak
\end{gather*}%
\begin{gather*}
=\prod_{i=1}^{3}f_{N}\left( x_{i}|q\right) \sum_{n=0}^{\infty }\left(
-1\right) ^{n}q^{\binom{n}{2}}\frac{\rho _{23}^{n}\rho _{13}^{n}}{\left[ n%
\right] _{q}!}\sum_{j=0}^{\infty }\frac{\rho _{12}^{j}}{\left[ j\right] _{q}!%
} \\
\times \sum_{i=0}^{\infty }\sum_{k=0}^{\infty }\frac{\rho _{23}^{k}\rho
_{13}^{i}}{\left[ i\right] _{q}!\left[ k\right] _{q}!}H_{i}\left(
x_{3}|q\right) H_{i+n+j}\left( x_{1}|q\right) H_{k}\left( x_{3}|q\right)
H_{j+n+k}\left( x_{2}|q\right) .
\end{gather*}%
Now we use quantities: $\gamma _{m,k}\left( x,y|\rho ,q\right) \allowbreak
=\allowbreak \sum_{i=0}^{\infty }\frac{\rho ^{i}}{\left[ i\right] _{q}!}%
H_{i+m}\left( x|q\right) H_{i+k}\left( y|q\right) $ and apply (\ref%
{gamma_m_k}) and Lemma \ref{UogCarl} iii). We get then%
\begin{gather*}
f_{3D}(x_{1},x_{2},x_{3}|\rho _{12},\rho _{13},\rho _{23},q)\allowbreak
=\allowbreak \prod_{i=1}^{3}f_{N}\left( x_{i}|q\right) \sum_{n=0}^{\infty
}\left( -1\right) ^{n}q^{\binom{n}{2}}\frac{\rho _{23}^{n}\rho _{13}^{n}}{%
\left[ n\right] _{q}!} \\
\times \sum_{j=0}^{\infty }\frac{\rho _{12}^{j}}{\left[ j\right] _{q}!}%
\gamma _{0,n+j}\left( x_{3},x_{1}|\rho _{13},q\right) \gamma _{0,n+j}\left(
x_{3},x_{2}|\rho _{23},q\right) \allowbreak \\
=f_{CN}\left( x_{1}|x_{3},\rho _{13},q\right) f_{CN}\left( x_{2}|x_{3},\rho
_{23},q\right) f_{N}\left( x_{3}|q\right) \allowbreak \\
\times \sum_{n=0}^{\infty }\left( -1\right) ^{n}q^{\binom{n}{2}}\frac{\rho
_{23}^{n}\rho _{13}^{n}}{\left[ n\right] _{q}!}\sum_{j=0}^{\infty }\frac{%
\rho _{12}^{j}}{\left[ j\right] _{q}!\left( \rho _{13}^{2}\right)
_{n+j}\left( \rho _{23}^{2}\right) _{n+j}}P_{n+j}\left( x_{1}|x_{3},\rho
_{13},q\right) P_{n+j}\left( x_{2}|x_{3},\rho _{23},q\right) =
\end{gather*}%
\begin{gather*}
f_{CN}\left( x_{1}|x_{3},\rho _{13},q\right) f_{CN}\left( x_{2}|x_{3},\rho
_{23},q\right) f_{N}\left( x_{3}|q\right) \\
\times \sum_{s=0}^{\infty }\frac{1}{\left[ s\right] _{q}!\left( \rho
_{13}^{2}\right) _{s}\left( \rho _{23}^{2}\right) _{s}}P_{s}\left(
x_{1}|x_{3},\rho _{13},q\right) P_{s}\left( x_{2}|x_{3},\rho _{23},q\right)
\\
\times \sum_{n=0}^{s}\QATOPD[ ] {s}{n}_{q}(-1)q^{\binom{n}{2}}\left( \rho
_{13}\rho _{23}\right) ^{n}\rho _{12}^{s-n}\allowbreak =\allowbreak
f_{CN}\left( x_{1}|x_{3},\rho _{13},q\right) f_{CN}\left( x_{2}|x_{3},\rho
_{23},q\right) f_{N}\left( x_{3}|q\right) \\
\times \sum_{s=0}^{\infty }\frac{\rho _{12}^{s}\left( \rho _{13}\rho
_{23}/\rho _{12}\right) _{s}}{\left[ s\right] _{q}!\left( \rho
_{13}^{2}\right) _{s}\left( \rho _{23}^{2}\right) _{s}}P_{s}\left(
x_{1}|x_{3},\rho _{13},q\right) P_{s}\left( x_{2}|x_{3},\rho _{23},q\right) .
\end{gather*}
\end{proof}

\begin{proof}[Proof of Proposition \protect\ref{na hermity}]
We start with $C_{n}\left( x,y|\rho _{1},\rho _{2},\rho _{3},q\right)
\allowbreak =\allowbreak \sum_{i=0}^{n}\QATOPD[ ] {n}{i}_{q}\rho
_{1}^{n-i}\rho _{2}^{i}Q_{n-i},_{i}\left( x,y|\rho _{3,}q\right) \allowbreak
=\allowbreak \sum_{s=0}^{n}\QATOPD[ ] {n}{s}_{q}H_{n-s}\left( y|q\right)
P_{s}\left( x|y,\rho _{3},q\right) \rho _{1}^{n-s}\rho _{2}^{s}\left( \rho
_{1}\rho _{3}/\rho _{2}\right) _{s}/\left( \rho _{3}^{2}\right) _{s}$ and
apply Lemma \ref{pomoc} ii). 
\begin{gather*}
C_{n}\left( x_{1},x_{3}|\rho _{12},\rho _{23},\rho _{13},q\right)
\allowbreak =\allowbreak \sum_{s=0}^{n}\QATOPD[ ] {n}{s}_{q}\rho
_{12}^{n-s}H_{n-s}\left( x_{3}|q\right) \rho _{23}^{s}\left( \frac{\rho
_{12}\rho _{13}}{\rho _{23}}\right) _{s}P_{s}\left( x_{1}|x_{3},\rho
_{13},q\right) /\left( \rho _{13}^{2}\right) _{s}\allowbreak \\
=\allowbreak \frac{1}{\left( \rho _{13}^{2}\right) _{n}}\sum_{s=0}^{n}\QATOPD%
[ ] {n}{s}_{q}\rho _{12}^{n-s}H_{n-s}\left( x_{3}|q\right) \rho
_{23}^{s}\left( \frac{\rho _{12}\rho _{13}}{\rho _{23}}\right)
_{s}P_{s}\left( x_{1}|x_{3},\rho _{13},q\right) \left( \rho
_{13}^{2}q^{s}\right) _{n-s}\allowbreak \\
=\frac{1}{\left( \rho _{13}^{2}\right) _{n}}\sum_{s=0}^{n}\QATOPD[ ] {n}{s}%
_{q}\rho _{12}^{n-s}H_{n-s}\left( x_{3}|q\right) \rho _{23}^{s}\left( \frac{%
\rho _{12}\rho _{13}}{\rho _{23}}\right) _{s}\left( \rho
_{12}^{2}q^{s}\right) _{n-s}\sum_{i=0}^{s}\QATOPD[ ] {s}{i}_{q}\rho
_{13}^{s-i}B_{s-i}\left( x_{3}|q\right) H_{i}\left( x_{1}|q\right)
\allowbreak \\
=\allowbreak \frac{1}{\left( \rho _{13}^{2}\right) _{n}}\sum_{i=0}^{n}\QATOPD%
[ ] {n}{i}_{q}H_{i}\left( x_{1}|q\right) \sum_{s=i}^{n}\QATOPD[ ] {n-i}{s-i}%
_{q}\rho _{12}^{n-s}H_{n-s}\left( x_{3}|q\right) \rho _{23}^{s}\left( \frac{%
\rho _{12}\rho _{13}}{\rho _{23}}\right) _{s}\left( \rho
_{13}^{2}q^{s}\right) _{n-s}\rho _{13}^{s-i}B_{s-i}\left( x_{3}|q\right)
\allowbreak
\end{gather*}

\begin{gather*}
=\frac{1}{\left( \rho _{13}^{2}\right) _{n}}\sum_{i=0}^{n}\QATOPD[ ] {n}{i}%
_{q}H_{i}\left( x_{1}|q\right) \times \\
\sum_{j=0}^{n-i}\QATOPD[ ] {n-i}{j}_{q}\rho _{12}^{n-i-j}H_{n-i-j}\left(
x_{3}|q\right) \rho _{23}^{i+j}\left( \frac{\rho _{12}\rho _{13}}{\rho _{23}}%
\right) _{i+j}\left( \rho _{13}^{2}q^{i+j}\right) _{n-i-j}\rho
_{13}^{j}B_{j}\left( x_{3}|q\right) .
\end{gather*}

And further using formula 
\begin{equation*}
H_{m}\left( x|q\right) B_{n}\left( x|q\right) =(-1)^{n}q^{\binom{n}{2}%
}\sum_{i=0}^{\left\lfloor (n+m)/2\right\rfloor }\QATOPD[ ] {n}{i}_{q}\QATOPD[
] {n+m-i}{i}_{q}\left[ i\right] _{q}!q^{-i(n-i)}H_{n+m-2i}\left( x|q\right) ,
\end{equation*}%
proved in \cite{Szab3} (Lemma 2 i)) we get%
\begin{gather*}
C_{n}\left( x_{1},x_{3}|\rho _{12},\rho _{23},\rho _{13},q\right) =\frac{1}{%
\left( \rho _{13}^{2}\right) _{n}}\sum_{i=0}^{n}\QATOPD[ ] {n}{i}%
_{q}H_{i}\left( x_{1}|q\right) \times \\
\sum_{j=0}^{n-i}\QATOPD[ ] {n-i}{j}_{q}\rho _{12}^{n-i-j}\rho
_{23}^{i+j}\left( \frac{\rho _{12}\rho _{13}}{\rho _{23}}\right)
_{i+j}\left( \rho _{13}^{2}q^{i+j}\right) _{n-i-j}\rho _{13}^{j}(-1)^{j}q^{%
\binom{j}{2}} \\
\times \sum_{k=0}^{\left\lfloor (n-i)/2\right\rfloor }\QATOPD[ ] {j}{k}_{q}%
\QATOPD[ ] {n-i-k}{k}_{q}\left[ k\right] _{q}!q^{-k(j-k)}H_{n-i-2k}\left(
x_{3}|q\right) .
\end{gather*}%
Now keeping in mind that $\QATOPD[ ] {j}{k}_{q}\allowbreak =\allowbreak 0$
for $j<k$ we split first internal sum into two sums : one form $0$ to $%
\left\lfloor (n-i)/2\right\rfloor $ and second from $\left\lfloor
(n-i)/2\right\rfloor +1$ to $(n-i).$ 
\begin{gather*}
=\frac{1}{\left( \rho _{13}^{2}\right) _{n}}\sum_{i=0}^{n}\QATOPD[ ] {n}{i}%
_{q}H_{i}\left( x_{1}|q\right) \times (\sum_{j=0}^{\left\lfloor
(n-i)/2\right\rfloor }\QATOPD[ ] {n-i}{j}_{q}\rho _{12}^{n-i-j}\rho
_{23}^{i+j}\left( \frac{\rho _{12}\rho _{13}}{\rho _{23}}\right)
_{i+j}\left( \rho _{13}^{2}q^{i+j}\right) _{n-i-j}\rho _{13}^{j}(-1)^{j}q^{%
\binom{j}{2}} \\
\times \sum_{k=0}^{j}\QATOPD[ ] {j}{k}_{q}\QATOPD[ ] {n-i-k}{k}_{q}\left[ k%
\right] _{q}!q^{-k(j-k)}H_{n-i-2k}\left( x_{3}|q\right) \\
+\sum_{j=\left\lfloor (n-i)/2\right\rfloor +1}^{(n-i)}\QATOPD[ ] {n-i}{j}%
_{q}\rho _{12}^{n-i-j}\rho _{23}^{i+j}\left( \frac{\rho _{12}\rho _{13}}{%
\rho _{23}}\right) _{i+j}\left( \rho _{13}^{2}q^{i+j}\right) _{n-i-j}\rho
_{13}^{j}(-1)^{j}q^{\binom{j}{2}} \\
\times \sum_{k=0}^{\left\lfloor (n-i)/2\right\rfloor }\QATOPD[ ] {j}{k}_{q}%
\QATOPD[ ] {n-i-k}{k}_{q}\left[ k\right] _{q}!q^{-k(j-k)}H_{n-i-2k}\left(
x_{3}|q\right) ).
\end{gather*}

We have further after changing the order of summation.%
\begin{gather*}
C_{n}\left( x_{1},x_{3}|\rho _{12},\rho _{23},\rho _{13},q\right) =\frac{1}{%
\left( \rho _{13}^{2}\right) _{n}}\sum_{i=0}^{n}\QATOPD[ ] {n}{i}%
_{q}H_{i}\left( x_{1}|q\right) \\
\times (\sum_{k=0}^{\left\lfloor (n-i)/2\right\rfloor }\frac{\left[ n-i%
\right] _{q}!}{\left[ k\right] _{q}!\left[ n-i-2k\right] _{q}!}%
H_{n-i-2k}\left( x_{3}|q\right) \\
\times \sum_{j=k}^{\left\lfloor (n-i)/2\right\rfloor }q^{-k(j-k)}\rho
_{13}^{j}(-1)^{j}q^{\binom{j}{2}}\frac{\left[ n-i-k\right] _{q}!}{\left[ j-k%
\right] _{q}!\left[ n-i-j\right] _{q}!}\rho _{12}^{n-i-j}\rho
_{23}^{i+j}\left( \frac{\rho _{12}\rho _{13}}{\rho _{23}}\right)
_{i+j}\left( \rho _{13}^{2}q^{i+j}\right) _{n-i-j} \\
+\sum_{k=0}^{\left\lfloor (n-i)/2\right\rfloor }\frac{\left[ n-i\right] _{q}!%
}{\left[ k\right] _{q}!\left[ n-i-2k\right] _{q}!}H_{n-i-2k}\left(
x_{3}|q\right) \\
\times \sum_{j=\left\lfloor (n-i)/2\right\rfloor +1}^{(n-i)}\frac{\left[
n-i-k\right] _{q}!}{\left[ j-k\right] _{q}!\left[ n-i-j\right] _{q}!}%
(-1)^{j}q^{-k(j-k)}q^{\binom{j}{2}}\rho _{13}^{j}\rho _{12}^{n-i-j}\rho
_{23}^{i+j}\left( \frac{\rho _{12}\rho _{13}}{\rho _{23}}\right)
_{i+j}\left( \rho _{13}^{2}q^{i+j}\right) _{n-i-j})
\end{gather*}

\begin{gather*}
=\frac{1}{\left( \rho _{13}^{2}\right) _{n}}\sum_{i=0}^{n}\QATOPD[ ] {n}{i}%
_{q}H_{i}\left( x_{1}|q\right) \\
\times (\sum_{k=0}^{\left\lfloor (n-i)/2\right\rfloor }\frac{\left[ n-i%
\right] _{q}!}{\left[ k\right] _{q}!\left[ n-i-2k\right] _{q}!}%
H_{n-i-2k}\left( x_{3}|q\right) \\
\times \sum_{j=k}^{(n-i)}\QATOPD[ ] {n-i-k}{j-k}_{q}(-1)^{j}q^{-k(j-k)}q^{%
\binom{j}{2}}\rho _{13}^{j}\rho _{12}^{n-i-j}\rho _{23}^{i+j}\left( \frac{%
\rho _{12}\rho _{13}}{\rho _{23}}\right) _{i+j}\left( \rho
_{13}^{2}q^{i+j}\right) _{n-i-j}).
\end{gather*}

Hence%
\begin{gather*}
C_{n}\left( x_{1},x_{3}|\rho _{12},\rho _{23},\rho _{13},q\right) =\frac{1}{%
\left( \rho _{13}^{2}\right) _{n}}\sum_{i=0}^{n}\QATOPD[ ] {n}{i}%
_{q}H_{i}\left( x_{1}|q\right) \\
\times \sum_{k=0}^{\left\lfloor (n-i)/2\right\rfloor }(-1)^{k}q^{\binom{k}{2}%
}\frac{\left[ n-i\right] _{q}!}{\left[ k\right] _{q}!\left[ n-i-2k\right]
_{q}!}\rho _{13}^{k}H_{n-i-2k}\left( x_{3}|q\right) \\
\times \sum_{s=0}^{n-i-k}\QATOPD[ ] {n-i-k}{s}_{q}(-1)^{s}q^{\binom{s}{2}%
}\rho _{13}^{s}\rho _{12}^{n-i-k-s}\rho _{23}^{i+k+s}\left( \frac{\rho
_{12}\rho _{13}}{\rho _{23}}\right) _{i+k+s}\allowbreak \left( \rho
_{13}^{2}q^{i+k+s}\right) _{n-i-k-s}.
\end{gather*}

So 
\begin{gather*}
C_{n}\left( x_{1},x_{3}|\rho _{12},\rho _{13},\rho _{23},q\right)
=\allowbreak \frac{1}{\left( \rho _{13}^{2}\right) _{n}}\sum_{i=0}^{n}\QATOPD%
[ ] {n}{i}_{q}H_{i}\left( x_{1}|q\right) \\
\times \sum_{k=0}^{\left\lfloor (n-i)/2\right\rfloor }(-1)^{k}q^{\binom{k}{2}%
}\frac{\left[ n-i\right] _{q}!}{\left[ k\right] _{q}!\left[ n-i-2k\right]
_{q}!}\rho _{13}^{k}\rho _{23}^{k+i}H_{n-i-2k}\left( x_{3}|q\right) \left( 
\frac{\rho _{12}\rho _{13}}{\rho _{23}}\right) _{k+i} \\
\times \sum_{s=0}^{n-i-k}\QATOPD[ ] {n-i-k}{s}_{q}(-1)^{s}q^{\binom{s}{2}%
}\rho _{13}^{s}\rho _{12}^{n-i-k-s}\rho _{23}^{s}\left( \frac{\rho _{12}\rho
_{13}}{\rho _{23}}q^{i+k}\right) _{s}\left( \rho _{13}^{2}q^{i+k+s}\right)
_{n-i-k-s}.
\end{gather*}%
Now we use formula $\sum_{i=0}^{n}\left( -1\right) ^{i}q^{\binom{i}{2}}%
\QATOPD[ ] {n}{i}_{q}\left( a\right) _{i}b^{i}\left( abq^{i}\right)
_{n-i}\allowbreak =\allowbreak \left( b\right) _{n}$ proved in (\cite{Szab3}%
) (Lemma 1 ii)) with $a\allowbreak =\allowbreak \frac{\rho _{12}\rho _{13}}{%
\rho _{23}}q^{i+k},$ $b\allowbreak =\allowbreak \rho _{13}\rho _{23}/\rho
_{12}$ getting 
\begin{gather*}
C_{n}\left( x_{1},x_{3}|\rho _{12},\rho _{13},\rho _{23},q\right)
=\allowbreak \frac{1}{\left( \rho _{13}^{2}\right) _{n}}\sum_{i=0}^{n}\QATOPD%
[ ] {n}{i}_{q}H_{i}\left( x_{1}|q\right) \\
\times \sum_{k=0}^{\left\lfloor (n-i)/2\right\rfloor }(-1)^{k}q^{\binom{k}{2}%
}\frac{\left[ n-i\right] _{q}!}{\left[ k\right] _{q}!\left[ n-i-2k\right]
_{q}!}\rho _{13}^{k}\rho _{23}^{k+i}H_{n-i-2k}\left( x_{3}|q\right) \left( 
\frac{\rho _{12}\rho _{13}}{\rho _{23}}\right) _{k+i}\rho
_{12}^{n-i-k}\left( \frac{\rho _{12}\rho _{23}}{\rho _{12}}\right) _{n-k-i}
\end{gather*}

Finally change the order of summation and using on the way an obvious
property of $\left( a\right) _{n}\allowbreak =\allowbreak \left( a\right)
_{j}\left( aq^{j}\right) _{n-j}$ for every $0\leq j\leq n$ 
\begin{gather*}
C_{n}\left( x_{1},x_{3}|\rho _{12},\rho _{13},\rho _{23},q\right) =\frac{1}{%
\left( \rho _{13}^{2}\right) _{n}}\sum_{k=0}^{\left\lfloor n/2\right\rfloor
}(-1)^{k}q^{\binom{k}{2}}\QATOPD[ ] {n}{2k}_{q}\QATOPD[ ] {2k}{k}_{q}\left[ k%
\right] _{q}!\rho _{12}^{k}\rho _{23}^{k}\rho _{13}^{k}\left( \frac{\rho
_{12}\rho _{13}}{\rho _{23}}\right) _{k}\left( \frac{\rho _{12}\rho _{23}}{%
\rho _{12}}\right) _{k} \\
\times \sum_{i=0}^{n-2k}\frac{\left[ n-2k\right] _{q}!}{\left[ n-i-2k\right]
_{q}!\left[ i\right] _{q}!}\rho _{23}^{i}\left( \frac{\rho _{12}\rho _{13}}{%
\rho _{23}}q^{k}\right) _{i}\rho _{12}^{n-i-2k}\left( \frac{\rho _{13}\rho
_{23}}{\rho _{12}}q^{k}\right) _{n-i-2k}H_{i}\left( x_{1}|q\right)
H_{n-2k-i}\left( x_{3}|q\right) .
\end{gather*}%
$\allowbreak \allowbreak $
\end{proof}


\begin{thebibliography}{99}
\bibitem{Askey89} Askey, Richard. Continuous \$q\$-Hermite polynomials when
\$q\TEXTsymbol{>}1\$. \$q\$-series and partitions (Minneapolis, MN, 1988),
151--158, IMA Vol. Math. Appl., 18, Springer, New York, 1989. MR1019849
(90h:33019)

\bibitem{Kibble} Kibble, W. F. An extension of a theorem of Mehler's on
Hermite polynomials. Proc. Cambridge Philos. Soc. 41, (1945). 12--15.
MR0012728 (7,65f)

\bibitem{Slepian72} Slepian, David. On the symmetrized Kronecker power of a
matrix and extensions of Mehler's formula for Hermite polynomials. SIAM J.
Math. Anal. 3 (1972), 606--616. MR0315173 (47 \#3722)

\bibitem{bressoud} Bressoud, D. M. A simple proof of Mehler's formula for
\$q\$-Hermite polynomials. Indiana Univ. Math. J. 29 (1980), no. 4,
577--580. MR0578207 (81f:33009)

\bibitem{suslov96} Askey, Richard A.; Rahman, Mizan; Suslov, Serge\u{\i} K.
On a general \$q\$-Fourier transformation with nonsymmetric kernels. J.
Comput. Appl. Math. 68 (1996), no. 1-2, 25--55. MR1418749 (98m:42033).

\bibitem{Floreanini97} Floreanini, Roberto; LeTourneux, Jean; Vinet, Luc.
More on the \$q\$-oscillator algebra and \$q\$-orthogonal polynomials. J.
Phys. A 28 (1995), no. 10, L287--L293. MR1343867 (96e:33043)

\bibitem{Flo97} Floreanini, Roberto; LeTourneux, Jean; Vinet, Luc. Symmetry
techniques for the Al-Salam--Chihara polynomials. J. Phys. A 30 (1997), no.
9, 3107--3114. MR1456902 (98k:33036)

\bibitem{Bo} Bo\.{z}ejko, Marek; K\"{u}mmerer, Burkhard; Speicher, Roland.
\$q\$-Gaussian processes: non-commutative and classical aspects. \emph{Comm.
Math. Phys.} \textbf{185} (1997), no. 1, 129--154. MR1463036 (98h:81053)

\bibitem{bryc1} Bryc, W\l odzimierz. Stationary random fields with linear
regressions. \emph{Ann. Probab.} \textbf{29} (2001), no. 1, 504--519.
MR1825162 (2002d:60014)

\bibitem{is202} Ismail, Mourad E. H.; Stanton, Dennis. \$q\$-integral and
moment representations for \$q\$-orthogonal polynomials. \emph{Canad. J.
Math.} \textbf{54} (2002), no. 4, 709--735. MR1913916 (2003k:33024)

\bibitem{Chen08} Chen, William Y. C.; Saad, Husam L.; Sun, Lisa H. The
bivariate Rogers-Szeg\H{o} polynomials. J. Phys. A 40 (2007), no. 23,
6071--6084. MR2343510 (2008k:33064)

\bibitem{bms} Bryc, W\l odzimierz; Matysiak, Wojciech; Szab\l owski, Pawe\l\ %
J. Probabilistic aspects of Al-Salam--Chihara polynomials. \emph{Proc. Amer.
Math. Soc.} \textbf{133} (2005), no. 4, 1127--1134 (electronic). MR2117214
(2005m:33033)

\bibitem{AI84} Askey, Richard; Ismail, Mourad. Recurrence relations,
continued fractions, and orthogonal polynomials. \emph{Mem. Amer. Math. Soc.}
\textbf{49} (1984), no. 300, iv+108 pp. MR0743545 (85g:33008)

\bibitem{IsMas94} Ismail, Mourad E. H.; Masson, David, R. \$q\$-Hermite
polynomials, biorthogonal rational functions, and \$q\$-beta integrals. 
\emph{Trans. Amer. Math. Soc.} \textbf{346} (1994), no. 1, 63--116.
MR1264148 (96a:33022)

\bibitem{AIs85} Al-Salam, W. A.; Ismail, Mourad E. H. \$q\$-beta integrals
and the \$q\$-Hermite polynomials. Pacific J. Math. 135 (1988), no. 2,
209--221. MR0968609 (90c:33001)

\bibitem{Carlitz56} Carlitz, L. Some polynomials related to theta functions. 
\emph{Ann. Mat. Pura Appl.} (4) 41 (1956), 359--373. MR0078510 (17,1205e)

\bibitem{Carlitz72} Carlitz, L. Generating functions for certain
\$Q\$-orthogonal polynomials. \emph{Collect. Math.} \textbf{23} (1972),
91--104. MR0316773 (47 \#5321)

\bibitem{AAR} George E. Andrews, Richard Askey, Ranjan Roy, \emph{Special
Functions,}Cambridge Univ. Press, 1999

\bibitem{IA} Ismail, Mourad E. H. Classical and quantum orthogonal
polynomials in one variable. With two chapters by Walter Van Assche. With a
foreword by Richard A. Askey. Encyclopedia of Mathematics and its
Applications, 98. \emph{Cambridge University Press,} Cambridge, 2005.
xviii+706 pp. ISBN: 978-0-521-78201-2; 0-521-78201-5 MR2191786 (2007f:33001)

\bibitem{IRS99} Ismail, Mourad E. H.; Rahman, Mizan; Stanton, Dennis.
Quadratic \$q\$-exponentials and connection coefficient problems. \emph{%
Proc. Amer. Math. Soc.} \textbf{127} (1999), no. 10, 2931--2941. MR1621949
(2000a:33027)

\bibitem{szab2} Szab\l owski, Pawe\l\ J. (2009) $q-$Gaussian Distributions:
Simplifications and Simulations, \emph{Journal of Probability and Statistics}%
, 2009 (article ID 752430)

\bibitem{Szab4} Szab\l owski, Pawe\l\ J., Expansions of one density via
polynomials orthogonal with respect to the other., J. Math. Anal. Appl. 383
(2011) 35--54, http://arxiv.org/abs/1011.1492

\bibitem{Szab3} Szab\l owski, Pawe\l\ J., On the structure and probabilistic
interpretation of Askey-Wilson densities and polynomials with complex
parameters. J. Functional Anal. 262(2011), 635-659,
http://arxiv.org/abs/1011.1541

\bibitem{Szab6} Szab\l owski, P. J. (2010), On summable form of
Poisson--Mehler kernel for big q-Hermite and Al-Salam--Chihara polynomials,
http://arxiv.org/abs/1011.1848, submitted
\end{thebibliography}
\end{document}